\renewenvironment{proof}{\paragraph{Proof:}}{\hfill$\square$}
\begin{document}

\title{Uncertain random geometric programming problems 
}

\titlerunning{Uncertain random geometric programming problems} 

\author{Tapas Mondal \and
 Akshay Kumar Ojha \and Sabyasachi Pani
}


\author{Tapas Mondal* \and Akshay Kumar Ojha \and Sabyasachi Pani }
\institute{Tapas Mondal* (Corresponding author) \at
	School of Basic Sciences, Indian Institute of Technology Bhubaneswar \\
	Tel.: +918345939869\\
	\email{tm19@iitbbs.ac.in} 
	\and
	Akshay Kumar Ojha \at
	School of Basic Sciences, Indian Institute of Technology Bhubaneswar \\
	\email{akojha@iitbbs.ac.in}
	\and
	Sabyasachi Pani \at
	School of Basic Sciences, Indian Institute of Technology Bhubaneswar \\
	\email{spani@iitbbs.ac.in}
}

\date{Received: date / Accepted: date}

\maketitle

\begin{abstract}
In this paper, we introduce a deterministic formulation for the geometric programming problem, wherein the coefficients are represented as independent linear-normal uncertain random variables. To address the challenges posed by this combination of uncertainty and randomness, we introduce the concept of an uncertain random variable and present a novel framework known as the linear-normal uncertain random variable. Our main focus in this work is the development of three distinct transformation techniques: the optimistic value criteria, pessimistic value criteria, and expected value criteria. These approaches allow us to convert a linear-normal uncertain random variable into a more manageable random variable. This transition facilitates the transformation from an uncertain random geometric programming problem to a stochastic geometric programming problem. Furthermore, we provide insights into an equivalent deterministic representation of the transformed geometric programming problem, enhancing the clarity and practicality of the optimization process. To demonstrate the effectiveness of our proposed approach, we present a numerical example.\\
\keywords{Stochastic programming \and Uncertainty modelling \and Linear-normal uncertain random variable \and Geometric programming}
\subclass{90C15 \and 90C30 \and 90C46 \and 90C47 \and 49K45}
\end{abstract}
\section{Introduction}\label{sec.1}
Geometric programming (GP) is a highly efficient approach for addressing nonlinear optimization problems when the objective and constraint functions are in posynomial form. In 1967, Duffin et al. \cite{Duffin 1967} first introduced the fundamental theories of the GP problem. In most cases, the GP method is employed for objective and constraint functions, which are in posynomial form. When all parameters in the GP problem are positive, except for the exponents, it is referred to as the posynomial problem. The GP approach transforms the original problem into a dual problem, making it more readily solvable.
\par In modern days, the GP problem is used in engineering design problems like circuit design \cite{Chu 2001,Hershenson 2001}, inventory modeling \cite{Mandal 2006,Roy 1997,Worrall 1982}, production planning \cite{Cheng 1991,Choi 1996,Islam 2007,Jung 2001,Kim 1998,Lee 1993}, risk management \cite{Scott 1995}, chemical processing \cite{Passy 1968,Ruckaert 1978,Wall 1986}, information theory \cite{Chiang 2005}, and structural design \cite{Gupta 1986}.
\par The classical GP problem is considered under the assumption that parameters are well-defined and fixed. Many researchers \cite{Avriel 1975,Beightler 1976,Duffin 1967,Duffin 1973,Fang 1988,Kortanek 1992,Kortanek 1997,Maranas 1997,Rajgopal 1990,Rajgopal 1992,Rajgopal 2002} have devised effective and functional algorithms designed for this traditional GP problem, where parameters are exact and specific. In reality, though, the values associated with GP parameters often lack certainty and precision. Consequently, various approaches have been crafted to address the challenge of GP in scenarios where parameter values are uncertain and less precise. Avriel and Wilde \cite{Avriel 1969}, for instance, introduced a new dimension with the stochastic GP problem, wherein they explored a GP scenario featuring objective and constraint function coefficients as nonnegative random variables. Dupačová \cite{Dupačová 2010} developed a metal cutting model in GP form and presented stochastic sensivity analysis. In 2016, Liu et al. \cite{Liu 2016} solved the stochastic GP problem with joint probabilistic constraints. Also, Liu et al. \cite{Liu 2020} came up with a new way to solve the joint rectangular chance-constrained GP problem with random parameters that are elliptically distributed and independent from each other. In 2021, Shiraz et al. \cite{Shiraz 2021} solved the stochastic GP problem with joint chance constraints based on copula theory. In this article, the coefficients of each term in the objective and constraint functions of the GP problem are taken as dependent random variables. Furthermore, the GP problem has been developed when the parameters are taken as interval types. Liu \cite{Liu 2006} developed a technique for determining the possible range of objective values in posynomial GP problems when coefficients and exponents have varying intervals. Additionally, Mahapatra and Mandal \cite{Mahapatra 2012} tackled the GP problem using interval-valued coefficients within a parametric framework.
\par Over the past few decades, the GP problem has been developed in fuzzy environments. In 1993, Cao \cite{Cao 1993} extended the GP problem to an imprecise environment, along with interval and fuzzy coefficients. Later on, the same author \cite{Cao 1997} proposed the GP problem with T-fuzzy coefficients. Mandal and Roy \cite{Mandal 2006} solved the GP problem with L-R fuzzy coefficients. Yang and Cao \cite{Yang 2007} made significant contributions to the fuzzy relational GP under monomial. Liu \cite{Liu 2007} developed the GP problem with fuzzy parameters. Shiraz et al. \cite{Shiraz 2017} used possibility, necessity, and credibility approaches to solve the fuzzy chance-constrained GP problem. Under the rough-set theory, Shiraz and Fukuyama \cite{Shiraz 2018} developed the GP problem.
\par Recently, Liu \cite{Liu 2015} pioneered uncertainty theory, a new and developing field of mathematics. To solve the GP problem, several researchers developed an uncertainty-based framework based on uncertainty theory. Shiraz et al. \cite{Shiraz 2016} first considered the GP problem in an uncertain environment. Based on uncertainty theory, the authors developed the deterministic form of an uncertain GP problem under normal, linear, and zigzag uncertainty distributions. In 2022, Mondal et al. \cite{Mondal 2022} developed a procedure for solving the GP problem with uncertainty. The authors derived the equivalent deterministic form of the GP problem under triangular and trapezoidal uncertainty distributions. In addition, the same authors \cite{Mondal 2023} proposed triangular and trapezoidal two-fold uncertainty distributions and developed three reduction methods to reduce two-fold uncertainty distributions into single-fold uncertainty distributions. Chassien and Goerigk \cite{Chassein 2019} introduced a robust GP problem with polyhedral and interval uncertainty. In 2023, Fontem \cite{Fontem 2023} demonstrated how to set bounds for a worst-case chance-constrained GP problem with random exponent parameters.
\par It is observed that the GP problem is taken into account under uncertainty distributions whenever the parameters of uncertainty distributions are taken in deterministic form. However, in reality, the parameters of an uncertainty distribution are not always deterministic. So the obvious question is: how can we deal with an uncertain variable when its parameters are random variables instead of constant values? If the parameters of an uncertain variable are random variables instead of constant values, then it is called an uncertain random variable. In 2013, Liu \cite{Liu 2013a} proposed an uncertain random variable that is a mixture of uncertainty and randomness. Further, the same author \cite{Liu 2013b} applied the concept of uncertain random variables to an optimization problem called uncertain random programming. In 2021, Liu et al. \cite{Liu 2021} developed portfolio optimization for uncertain random returns. In addition, Chen et al. \cite{Chen 2021} and Li et al. \cite{Li 2022} solved portfolio optimization with uncertain random parameters. Zhou et al. \cite{Zhou 2014} introduced multi-objective optimization in uncertain random environments. Wang et al. \cite{Wang 2015} developed an inventory model with uncertain random demand. Ke et al. \cite{Ke 2015} solved a decentralized decision-making production control problem with uncertain random parameters. In 2018, Qin \cite{Qin 2018} introduced uncertain goal programming. Gao et al. \cite{Gao 2018b} proposed the concept of an uncertain random variable in a complex form. Ahmadzade et al. \cite{Ahmadzade 2020} and Gao et al. \cite{Gao 2018a} introduced the convergence concept of distribution for an uncertain random variable.
\par In most of the optimization problems with uncertain random parameters, the authors considered some of the parameters to be uncertain variables and some to be random variables. From the literature survey, it is observed that there has been a lot of research on the crisp and fuzzy GP problem. In addition, an optimization problem is developed when some of parameters are uncertain variables and some of random variables. To the best of our knowledge, no previous work on the GP problem with uncertain random coefficients has been done. As a result, we attempt to investigate the GP problem with uncertain random coefficients. The following are the main contributions we made to this study: 
\begin{description}
	\item[I.] We introduce a novel deterministic formulation for the GP problem, where coefficients are uniquely represented as independent linear-normal uncertain random variables. 
	\item[II.] To tackle uncertainty and randomness, we introduce the concept of an uncertain random variable and propose a novel framework called the linear-normal uncertain random variable.
	\item[III.] We develop three distinct transformation methods, each with its own unique approach: the optimistic value criteria, pessimistic value criteria, and expected value criteria. These methods are instrumental in converting complex linear-normal uncertain random variables into more manageable, tractable random variables. 
	\item[IV.] The transformation methods enable us to convert a linear-normal uncertain random variable into a more tractable random variable, facilitating the transition from an uncertain random GP problem to a stochastic GP problem.
	\item[V.] We provide insights into an equivalent deterministic representation of the transformed GP problem, adding clarity and practicality to the optimization process.
	\item[VI.] To demonstrate the effectiveness and real-world applicability of our proposed approach, we present a comprehensive numerical example, offering tangible evidence of its efficacy in addressing GP problems.
\end{description}
 In essence, our research presents a paradigm shift in addressing GP problems, introducing innovative concepts and methods that enhance understanding, clarity, and practicality in optimization processes. 
\par The rest of the paper is organized as follows: A linear-normal uncertain random variable is proposed in Section \ref{sec.2}. Transformation methods are developed in Section \ref{sec.3}. The GP problem with linear-normal uncertain variable coefficients is considered, and the deterministic formulation is developed in Section \ref{sec.4}. In Section \ref{sec.5}, a numerical example is given. Finally, a conclusion on this work is incorporated in Section \ref{sec.6}.
\section{Linear-normal uncertain random variable}\label{sec.2} 
In this section, we introduce the concept of a linear-normal uncertain random variable. To begin, let us delve into some foundational ideas from both probability theory and uncertainty theory.
\subsection{Probability theory}
Probability theory is a field of mathematics focused on exploring the characteristics of unpredictable events. Within probability theory, the concept of a random variable plays an important role. To grasp the essence of a random variable, it is essential to familiarize ourselves with the notions of probability measure and probability space.
\begin{definition}\cite{Liu 2004}\label{def.1}
	Consider the $\sigma-$algebra $L_p$ defined over a nonempty set $\Omega$. To establish the concept of a random variable, we must initially introduce the probability measure and probability space. The probability measure, denoted as $Pr:L_p\rightarrow[0,1],$ adheres to the following axioms:
	\begin{description}
		\item[I.] (Normality Axiom) $Pr\{\Omega\}=1$.
		\item[II.] (Nonnegativity Axiom) $Pr\{A\}\ge 0$ for any event $A\in L_p$.
		\item[III.] (Additivity Axiom) $Pr\big\{ \bigcup\limits_{i=1}^{\infty}A_i \big\} = \sum\limits_{i=1}^{\infty}Pr\{A_i\}$ for every countable sequence of events $\{A_i\}_{i=1}^{\infty}$.
	\end{description}
\end{definition}
	\par Furthermore, the trio denoted as $(\Omega,L_p,Pr)$ constitutes the probability space. The notion of a random variable proves highly valuable in the context of probability description. Here, we provide the definition of a random variable.
	\begin{definition}\cite{Liu 2004}\label{def.2}
		A measurable function, denoted as $\omega:(\Omega,L_p,Pr)\to \mathbb{R}$, is referred to as a random variable. In this context, $(\Omega,L_p,Pr)$ represents the probability space, and $\mathbb{R}$ is the set of real numbers. In simpler terms, for any Borel set $B$, the set ${\omega\in B}$ is considered an event.
	\end{definition}
	\par On the other hand, one can distinctly define a random variable through its associated probability distribution, which holds essential information about the random variable. In many instances, having knowledge of the probability distribution is more than enough, obviating the need to know the random variable directly. Below is the mathematical definition of probability distribution.
	\begin{definition}\cite{Liu 2004}\label{def.3}
		If $\Phi_p$ stands as the probability distribution associated to a random variable denoted as $\omega$, then $\Phi_p$ is formally defined as
		\begin{equation*}
			\Phi_p(x)=Pr\{\omega\le x\},\forall x\in \mathbb{R}.
		\end{equation*}
	\end{definition}
	\par Additionally, the following theorem supports the requirement that is both necessary and sufficient for a probability distribution function.
	\begin{theorem}\cite{Liu 2004}\label{thm.1}
		A function $\Phi_p:\mathbb{R}\to[0,1]$ qualifies as a probability distribution if and only if it satisfies the conditions of being monotonically increasing, right-continuous, and having the limits $\underset{x\to -\infty}\lim\Phi_p(x)=0$ and $\underset{x\to +\infty}\lim\Phi_p(x)=1$.
	\end{theorem}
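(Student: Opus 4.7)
The plan is to establish the equivalence in both directions, with the easy direction being necessity and the harder one being sufficiency.

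For necessity, I would assume $\Phi_p$ is the distribution of a random variable $\omega$ on some probability space $(\Omega,L_p,Pr)$ and derive each of the four required properties from the axioms in Definition \ref{def.1}. Monotonicity follows because $x_1\le x_2$ implies the event inclusion $\{\omega\le x_1\}\subseteq\{\omega\le x_2\}$, so by additivity applied to the disjoint decomposition $\{\omega\le x_2\}=\{\omega\le x_1\}\cup\{x_1<\omega\le x_2\}$ together with the nonnegativity axiom, we get $\Phi_p(x_1)\le\Phi_p(x_2)$. For right-continuity at $x$, I would take any sequence $x_n\downarrow x$ and note that $\{\omega\le x_n\}\downarrow\{\omega\le x\}$; the continuity-from-above property of $Pr$, which is a standard consequence of countable additivity, then yields $\Phi_p(x_n)\to\Phi_p(x)$. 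The two limit conditions follow similarly: as $x\to+\infty$ along an integer sequence, the events $\{\omega\le n\}$ increase to $\Omega$ and hence their probabilities tend to $Pr\{\Omega\}=1$ by the normality axiom and continuity from below; as $x\to-\infty$, the events decrease to $\emptyset$, whose probability is $0$.

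For sufficiency, the plan is to construct an explicit probability space and random variable realizing $\Phi_p$. I would take $\Omega=(0,1)$ equipped with the Borel $\sigma$-algebra $L_p$ and Lebesgue measure as $Pr$; then I would define the quantile function
\begin{equation*}
\omega(u)=\inf\{x\in\mathbb{R}:\Phi_p(x)\ge u\},\quad u\in(0,1).
\end{equation*}
The two limit conditions ensure this infimum is a well-defined real number for every $u\in(0,1)$, monotonicity makes $\omega$ nondecreasing (hence Borel measurable, confirming it is a random variable in the sense of Definition \ref{def.2}), and right-continuity of $\Phi_p$ gives the key identity $\{u\in(0,1):\omega(u)\le x\}=\{u\in(0,1):u\le\Phi_p(x)\}$. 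Taking Lebesgue measure on both sides then yields $Pr\{\omega\le x\}=\Phi_p(x)$, which is exactly the distribution function condition of Definition \ref{def.3}.

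The main obstacle is the equivalence $\{\omega\le x\}=\{u\le\Phi_p(x)\}$ in the sufficiency step: the forward inclusion is straightforward from the definition of $\omega$, but the reverse inclusion genuinely requires right-continuity, since without it the infimum in the definition of $\omega$ might not be attained at a point where $\Phi_p$ jumps. I would handle this carefully by arguing that if $u\le\Phi_p(x)$, then $x$ belongs to the set whose infimum defines $\omega(u)$ — here right-continuity guarantees $\Phi_p(\omega(u))\ge u$ so that the infimum behaves as expected at jump points. Once this identity is established, the rest of the argument is routine verification against the axioms, completing both directions of the iff.
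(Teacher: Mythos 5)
The paper does not prove this statement at all: it is quoted verbatim from Liu's textbook (the citation to \cite{Liu 2004}) and used as a black box, so there is no in-paper argument to compare yours against. On its own merits, your proof is correct and is the standard one. The necessity half correctly derives monotonicity from additivity plus nonnegativity, and right-continuity and the two limits from continuity of the probability measure along monotone sequences of events (which is the usual consequence of countable additivity, worth stating explicitly since Definition \ref{def.1} lists only the three axioms). The sufficiency half is the quantile-function (inverse-transform) construction on $\bigl((0,1),\mathcal{B},\mathrm{Leb}\bigr)$, and your identification of the crux --- that right-continuity forces $\Phi_p(\omega(u))\ge u$, so the set $\{x:\Phi_p(x)\ge u\}$ is closed on the left and the identity $\{\omega(u)\le x\}=\{u\le\Phi_p(x)\}$ holds --- is exactly right. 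One small slip: you have the two inclusions labelled backwards. The implication $u\le\Phi_p(x)\Rightarrow\omega(u)\le x$ is the one that is immediate from the definition of the infimum, while $\omega(u)\le x\Rightarrow u\le\Phi_p(x)$ is the one that needs $\Phi_p(\omega(u))\ge u$ and hence right-continuity; your prose attaches the right-continuity argument to the easy direction. The mathematics you invoke is all present, so this is a bookkeeping error rather than a gap, but it should be fixed in a final write-up.
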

\par  Normal random variables play an important role in order to define the linear-normal uncertain random variable. Here is the definition of the normal random variable.
\begin{definition}\cite{Liu 2004}\label{def.4}
	Let $\Phi_p$ be the probability distribution corresponding to a random variable $\omega$. The random variable $\omega$ is said to be normal if and only if $\Phi_p$ is defined as\\
	\[\Phi_p(x)=\frac{1}{2}\bigg[1+\text{erf}\bigg(\frac{x-\mu}{\sigma\sqrt{2}}\bigg)\bigg], x\in\mathbb{R},
	\]
	where $\mu,\sigma$ are real numbers with $\sigma>0$, and  $\text{erf}(z)=\frac{2}{\sqrt{\pi}}\int\limits_{0}^{z}e^{-t^2}dt.$
	If $\omega$ is a normal random variable with the parameters $\mu$ and $\sigma$, then we write it as $\omega\sim \mathcal{N}(\mu,\sigma).$ 
\end{definition}
\begin{remark}
If $\mu=0$ and $\sigma=1$, then the random variable $\omega\sim \mathcal{N}(0,1)$ is known as a standard normal random variable. Its distribution function is 
\[\Phi(x)=\frac{1}{2}\bigg[1+\text{erf}\bigg(\frac{x}{\sqrt{2}}\bigg)\bigg], x\in\mathbb{R}.
\]
\end{remark}
\subsection{Uncertainty theory} 
 Uncertainty theory is an emerging mathematical discipline that employs uncertain variables to depict the attributes of uncertainties and represent degrees of belief. In order to introduce the concept of uncertain variables, it is essential to grasp the notions of uncertain measure and uncertainty space.
\begin{definition}\cite{Liu 2015}\label{def.5}
	Consider the $\sigma-$algebra $L_u$ defined over a nonempty set $\Gamma$. To introduce the concept of an uncertain variable, we must initially define the uncertain measure and the concept of an uncertainty space. The uncertain measure is represented as the set function $\mathcal{M}:L_u\rightarrow[0,1],$ which satisfies to the following axioms:
	\begin{description}
		\item[I.] (Normality Axiom) $\mathcal{M}\{\Gamma\}=1$.
		\item[II.] (Duality Axiom) $\mathcal{M}\{A\}+\mathcal{M}\{A^c\}=1$ for any event $A\in L_u$.
		\item[III.] (Subadditivity Axiom) $\mathcal{M}\big\{ \bigcup\limits_{i=1}^{\infty}A_i \big\} \leq \sum\limits_{i=1}^{\infty}\mathcal{M}\{A_i\}$ for every countable sequence of events $\{A_i\}_{i=1}^{\infty}$.
	\end{description}
\end{definition}
\par Furthermore, the trio denoted as $(\Gamma,L_u,\mathcal{M})$ constitutes the uncertainty space. The concept of the uncertain variable is highly valuable in the context of describing uncertainties. Below is the formal definition of the uncertain variable.
\begin{definition}\cite{Liu 2015}\label{def.6}
	A measurable function denoted as $\xi:(\Gamma,L_u,\mathcal{M})\to \mathbb{R}$ is referred to as an uncertain variable. In this context, $(\Gamma,L_u,\mathcal{M})$ represents the uncertainty space, and $\mathbb{R}$ corresponds to the set of real numbers. Put simply, for any Borel set $B$, the set ${\xi\in B}$ is considered an event.
\end{definition}
\par Alternatively, an uncertain variable is identified through its associated uncertainty distribution, which carries uncertain information regarding the uncertain variable. In many instances, having knowledge of the uncertainty distribution is more than sufficient, obviating the need to know the uncertain variable directly. Here is the mathematical definition of uncertainty distribution.
\begin{definition}\cite{Liu 2015}\label{def.7}
	If $\Phi_u$ represents the uncertainty distribution corresponding to an uncertain variable $\xi$, then $\Phi_u$ is defined as
	\begin{equation*}
		\Phi_u(x)=\mathcal{M}\{\xi\le x\},\forall x\in \mathbb{R}.
	\end{equation*}
\end{definition}
\par In addition, the requisite and complete condition for an uncertainty distribution function is provided in the following theorem.
\begin{theorem}\cite{Liu 2015}\label{thm.2}
	A function $\Phi_u:\mathbb{R}\to[0,1]$ qualifies as an uncertainty distribution if and only if it displays monotonic increasing behavior, excluding the cases where $\Phi_u(x)=0$ or $\Phi_u(x)=1.$
\end{theorem}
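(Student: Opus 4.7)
The plan is to prove both directions of the biconditional separately: first necessity (any uncertainty distribution has these properties) and then sufficiency (any function with these properties arises as an uncertainty distribution), with the latter requiring an explicit construction of an uncertain variable on a suitable uncertainty space.

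For necessity, I would start from the defining relation $\Phi_u(x) = \mathcal{M}\{\xi \le x\}$ for some uncertain variable $\xi$. Monotonicity follows from the inclusion $\{\xi \le x_1\} \subseteq \{\xi \le x_2\}$ whenever $x_1 < x_2$, combined with monotonicity of the uncertain measure (which is itself derivable from the duality and subadditivity axioms of Definition \ref{def.5}: $\mathcal{M}\{A\} \le \mathcal{M}\{B\}$ whenever $A \subseteq B$). To rule out the trivial functions $\Phi_u \equiv 0$ and $\Phi_u \equiv 1$, I would argue by contradiction: if $\Phi_u \equiv 0$, then $\mathcal{M}\{\xi \le x\} = 0$ for all $x$, so by duality $\mathcal{M}\{\xi > x\} = 1$ for all $x$; similarly $\Phi_u \equiv 1$ forces $\mathcal{M}\{\xi \le x\} = 1$ for all $x$. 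Each case contradicts the normality axiom once we look at the pair of complementary events $\{\xi \le x\}$ and $\{\xi > x\}$ at varying thresholds.

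For sufficiency, the natural plan is an explicit construction. Take $\Gamma = \mathbb{R}$, let $L_u$ be the Borel $\sigma$-algebra, and define the uncertain variable as the identity map $\xi(\gamma) = \gamma$. The uncertain measure must be specified so that $\mathcal{M}\{(-\infty, x]\} = \Phi_u(x)$ and, by duality, $\mathcal{M}\{(x,\infty)\} = 1 - \Phi_u(x)$. I would first define $\mathcal{M}$ on the algebra generated by half-lines (giving the values above and their finite unions/intersections via duality), and then extend to all Borel sets. The standard extension formula in uncertainty theory sets $\mathcal{M}\{A\} = \sup_{B \subseteq A} \mathcal{M}\{B\}$ for measurable $B$ with known measure when this supremum is at most $1/2$, and dually $\mathcal{M}\{A\} = 1 - \sup_{B \subseteq A^c} \mathcal{M}\{B\}$ otherwise; this construction is exactly what Liu's uncertainty theory provides.

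The main obstacle will be verifying the three axioms for the constructed $\mathcal{M}$, especially subadditivity, since monotonicity of $\Phi_u$ alone does not automatically give countable subadditivity on the full Borel $\sigma$-algebra — one really needs the two-sided extension scheme described above, and the technical verification that it is well-defined and satisfies all three axioms is the substantive content. Normality is immediate from the assumption that $\Phi_u$ is not identically $0$ (so that $\Gamma = \mathbb{R}$ receives measure $1$ via the limiting behaviour), and duality is built into the construction; the delicate point is that subadditivity has to be established uniformly across arbitrary countable unions of Borel sets. Once this extension is in hand, the final step is routine: by definition $\mathcal{M}\{\xi \le x\} = \mathcal{M}\{(-\infty,x]\} = \Phi_u(x)$, so $\Phi_u$ is indeed the uncertainty distribution of $\xi$.
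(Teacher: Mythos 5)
The paper does not actually prove Theorem \ref{thm.2}; it is imported from Liu (2015) as a known result (the Peng--Iwamura characterization of uncertainty distributions), so there is no in-paper argument to compare against. Your two-direction plan is the standard route from that source: necessity via monotonicity of the uncertain measure, sufficiency by realizing $\Phi_u$ as the distribution of the identity map on $\mathbb{R}$ under an uncertain measure built from its prescribed values on half-lines. The monotonicity half of necessity is fine (and your derivation of monotonicity of $\mathcal{M}$ from duality plus subadditivity is correct).

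Two steps would fail as written, however. First, $\Phi_u\equiv 0$ does \emph{not} contradict the normality axiom via duality at varying thresholds: the assignment $\mathcal{M}\{\xi\le x\}=0$ and $\mathcal{M}\{\xi>x\}=1$ for every $x$ is consistent with normality and duality at each threshold separately, and uncertain measures are not continuous from below, so you cannot pass to the limit. The contradiction requires \emph{countable subadditivity} applied to $\Gamma=\bigcup_{n}\{\xi\le n\}$, which forces $1=\mathcal{M}\{\Gamma\}\le\sum_{n}\mathcal{M}\{\xi\le n\}=0$ (symmetrically, use $\Gamma=\bigcup_n\{\xi>-n\}$ when $\Phi_u\equiv 1$); this is exactly Liu's asymptotic theorem, and subadditivity is the only axiom that connects the individual events to $\Gamma$. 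Second, the two-case extension rule you state does not define an uncertain measure on the Borel sets. Take $\Phi_u(x)=0.3$ for $x<1$ and $\Phi_u(x)=0.6$ for $x\ge 1$ (monotone increasing, not identically $0$ or $1$) and $A=\bigcup_{n\ge 0}(2n,2n+1]$: neither $A$ nor $A^c$ contains a half-line of measure at least $1/2$, so your rule assigns $\mathcal{M}\{A\}=0$ and $\mathcal{M}\{A^c\}=\Phi_u(0)=0.3$, violating duality. Making the extension well defined, dual, and countably subadditive on all Borel sets is the substantive content of the sufficiency direction, and it is precisely where the hypotheses $\Phi_u\not\equiv 0$ and $\Phi_u\not\equiv 1$ must enter (otherwise $\mathbb{R}=\bigcup_n(-\infty,n]$ already makes normality and countable subadditivity incompatible). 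As the plan stands, that verification is deferred to ``what Liu's theory provides,'' but it is essentially the theorem being proved.
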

\par Describing uncertain information geometrically in a two-dimensional space using the uncertainty distribution is straightforward. Real-world decision-making systems often employ various uncertain variables, such as normal, linear, zigzag, log-normal, triangular, and trapezoidal. Linear uncertain variables are particularly significant in defining linear-normal uncertain random variables. Here is the definition of a linear uncertain variable.
\begin{definition}\cite{Liu 2015}\label{def.8}
	Let $\Phi_u$ represents the uncertainty distribution corresponding to an uncertain variable $\xi$. The uncertain variable $\xi$ is classified as linear if and only if $\Phi_u$ is of the following form:\\
	\[\Phi_u(x)=\left\{
	\begin{array}{ll}
		0, & x\leq a;\\
		\frac{x-a}{b-a}, & a\leq x\leq b;\\
		1, & x \geq b;\\
	\end{array}
	\right.
	\]
where $a$ and $b$ are real numbers with $a < b$. When $\xi$ is a linear uncertain variable with parameters $a$ and $b$, we denote it as $\xi\sim \mathcal{L}(a,b)$.
\end{definition}
\subsection{Uncertain random variable}
An uncertain random variable is an uncertain variable whose parameters are random variables instead of constant values. For better clarification, in a linear uncertain variable $\xi\sim \mathcal{L}(a,b)$, the parameters $a$ and $b$ are taken as constants. In this scenario, the obvious question is: how can we deal with the linear uncertain variable $\xi\sim \mathcal{L}(a,b)$ if its parameters $a$ and $b$ are random variables instead of constant values? In order to describe this kind of variable, we need to define an uncertain random variable.
\begin{definition}\label{def.9}
	An uncertain random variable $\tilde{\xi}$ is a function from the uncertainty space $(\Gamma,L_u,\mathcal{M})$ to the set of random variables such that $Pr\{\tilde{\xi}(\omega)\in B\}$ is a measurable function of $\omega$ for any Borel set $B$ of $\mathbb{R}.$
\end{definition}
\begin{remark}
	In broad terms, an uncertain random variable can be described as a measurable function that operates within the uncertainty space and maps to the set of random variables. In simpler terms, it is an element characterized by uncertainty, which can take on values that belong to the category of random variables.
	\end{remark}
\par In this research, we propose a linear-normal uncertain random variable. The proposed definition is given below.
\begin{definition}\label{def.10}
	A linear-normal uncertain random variable is a linear uncertain variable whose parameters are normal random variables. More precisely, the uncertain random variable $\tilde{\xi}\sim \mathcal{L}(A,B)$ is said to be a linear-normal uncertain random variable if and only if the parameters $A$ and $B$ are normal random variables. 
\end{definition}
\begin{remark}
In Definition \ref{def.10}, the parameters $A$ and $B$ of the linear-normal uncertain random variable $\tilde{\xi}\sim \mathcal{L}(A,B)$ are taken as independent normal random variables.
\end{remark}	
\section{ Transformation of uncertain random variable to random variable}\label{sec.3}
In this section, we demonstrate the process of converting an uncertain random variable into a conventional random variable. To accomplish this, we employ three critical value criteria: optimistic, pessimistic, and expected value criteria. We begin by introducing the idea of critical values for uncertain variables, which Liu \cite{Liu 2015} first introduced in 2015. Here is the definition of these critical values.
\begin{definition}\cite{Liu 2015,Yang 2015}\label{def.11}
	If $\xi$ is an uncertain variable, then the optimistic, pessimistic, and expected values of $\xi$ are defined as follows:
	\begin{description}
		\item[ I.] $\xi_{\sup}(\alpha)=\sup\{r|\mathcal{M}\{\xi\geq r\}\geq \alpha\},\alpha\in (0,1);$
		\item[ II.] $\xi_{\inf}(\alpha)=\inf\{r|\mathcal{M}\{\xi\leq r\}\geq \alpha\},\alpha\in (0,1);$
		\item[ III.] $\xi_{exp}=\int\limits_{0}^{\infty}\mathcal{M}\{\xi\geq r\}dr-
		\int\limits_{-\infty}^{0}\mathcal{M}\{\xi\leq r\}dr.$
	\end{description}
\end{definition} 
\par Alternatively, Definition \ref{def.11} can be expressed using uncertainty distribution. The subsequent theorem, commonly referred to as the measure inversion theorem, proves to be highly valuable in this context.
\begin{theorem}\cite{Liu 2015,Yang 2015}\label{thm.3}
	Consider an uncertain variable $\xi$ with an associated uncertainty distribution $\Phi_u$. For any real number $r$, the following relationships hold: $\mathcal{M}\{\xi\leq r\}=\Phi_u(r)\text{ and } \mathcal{M}\{\xi\geq r\}=1-\Phi_u(r).$
\end{theorem}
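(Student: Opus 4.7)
The plan is to reduce both identities directly to Definition \ref{def.7} and the axioms of uncertain measure from Definition \ref{def.5}. The first claim $\mathcal{M}\{\xi \leq r\} = \Phi_u(r)$ is essentially just Definition \ref{def.7} evaluated at the real number $r$, so I would dispatch it in a single line, simply citing that definition.

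For the second claim $\mathcal{M}\{\xi \geq r\} = 1 - \Phi_u(r)$, the natural tool is the duality axiom from Definition \ref{def.5}. Setting $A = \{\xi \geq r\}$, the complementary event is $A^c = \{\xi < r\}$, so duality yields $\mathcal{M}\{\xi \geq r\} = 1 - \mathcal{M}\{\xi < r\}$. The task then reduces to identifying $\mathcal{M}\{\xi < r\}$ with $\Phi_u(r) = \mathcal{M}\{\xi \leq r\}$, after which substitution produces the desired formula.

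The main obstacle is precisely this last identification: the events $\{\xi < r\}$ and $\{\xi \leq r\}$ differ by the singleton event $\{\xi = r\}$, whose uncertain measure is not, in general, guaranteed to be zero. To handle this I would write $\{\xi < r\} = \bigcup_{n \geq 1}\{\xi \leq r - 1/n\}$, apply monotonicity of $\mathcal{M}$ and Definition \ref{def.7} to obtain $\mathcal{M}\{\xi < r\} = \lim_{n \to \infty}\Phi_u(r - 1/n)$, and then appeal to the left-limit convention on uncertainty distributions adopted in the Liu framework (as used in the references cited with the theorem) to replace this limit with $\Phi_u(r)$. Combining this with the duality identity derived above yields the second equality and completes the argument.
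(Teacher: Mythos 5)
The paper itself offers no proof of this theorem; it is imported verbatim from the cited references, so the only basis for comparison is the standard argument found there. Your handling of the first identity (a direct appeal to Definition \ref{def.7}) and your use of the duality axiom to write $\mathcal{M}\{\xi\ge r\}=1-\mathcal{M}\{\xi<r\}$ are both correct and agree with that standard argument.

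The gap is in the step $\mathcal{M}\{\xi<r\}=\lim_{n\to\infty}\mathcal{M}\{\xi\le r-1/n\}$. Writing $\{\xi<r\}=\bigcup_{n\ge1}\{\xi\le r-1/n\}$ and invoking monotonicity only yields $\mathcal{M}\{\xi<r\}\ge\sup_{n}\Phi_u(r-1/n)$; the reverse inequality would require continuity from below of $\mathcal{M}$, which for probability measures follows from countable additivity but is \emph{not} available here: an uncertain measure is only countably subadditive (Definition \ref{def.5}), and the measure of an increasing union need not equal the limit of the measures. The standard repair is a two-sided sandwich that avoids countable unions altogether: from $\{\xi>r\}\subseteq\{\xi\ge r\}\subseteq\{\xi>r-\epsilon\}$, monotonicity and duality give $1-\Phi_u(r)\le\mathcal{M}\{\xi\ge r\}\le 1-\Phi_u(r-\epsilon)$ for every $\epsilon>0$, and letting $\epsilon\downarrow 0$ closes the argument \emph{provided} $\Phi_u$ is left-continuous at $r$. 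That proviso is not a harmless ``convention'': by Theorem \ref{thm.2} an uncertainty distribution need only be monotone increasing, so continuity at $r$ is a genuine extra hypothesis (satisfied by the regular distributions actually used in this paper, such as the linear one). Without it, duality gives exactly $\mathcal{M}\{\xi>r\}=1-\Phi_u(r)$, and the stated version with $\ge$ can fail at a discontinuity point. You correctly sensed that the passage from $\{\xi\le r\}$ to $\{\xi<r\}$ is the crux, but the justification you give for it rests on a continuity property the uncertain measure does not possess.
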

\par Based on Theorem \ref{thm.3}, Definition \ref{def.11} can be redefined as
\begin{description}
	\item[ I.] $\xi_{\sup}(\alpha)=\sup\{r|1-\Phi_u(r)\geq \alpha\},\alpha\in (0,1);$
	\item[ II.] $\xi_{\inf}(\alpha)=\inf\{r|\Phi_u(r)\geq \alpha\},\alpha\in (0,1);$
	\item[ III.] $\xi_{exp}=\int\limits_{0}^{\infty}(1-\Phi_u(r))dr-
	\int\limits_{-\infty}^{0}\Phi_u(r)dr.$
\end{description}
\begin{remark}
	Integrating by parts, we have 
	$\int\limits_{0}^{\infty}(1-\Phi_u(r))dr-
	\int\limits_{-\infty}^{0}\Phi_u(r)dr=\int\limits_{-\infty}^{\infty}r{\Phi'_u}(r)dr.$ Therefore, the expected value of an uncertain variable $\xi$ can be determined by the formula given as $\xi_{exp}=\int\limits_{-\infty}^{\infty}r{\Phi'_u}(r)dr.$ 
\end{remark}
\par Based on the above operations, the following useful theorem is developed.
\begin{theorem}\cite{Liu 2015,Yang 2015}\label{thm.4}
	Let $\xi\sim \mathcal{L}(a,b)$ be a linear uncertain variable. The optimistic, pessimistic, and expected values of $\xi$ are as follows:
	\begin{description}
		\item[ I.] $\xi_{\sup}(\alpha)=\alpha a+(1-\alpha)b,\alpha\in (0,1);$
		\item[ II.] $\xi_{\inf}(\alpha)=(1-\alpha)a+\alpha b,\alpha\in (0,1);$
		\item[ III.] $\xi_{exp}=\frac{a+b}{2}.$
	\end{description} 
\end{theorem}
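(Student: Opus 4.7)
The plan is to plug the linear uncertainty distribution from Definition~\ref{def.8} into the distribution-based reformulations of the optimistic, pessimistic, and expected values that were derived from Theorem~\ref{thm.3}, and then read off the three closed forms by straightforward algebra. Since $\Phi_u$ is piecewise affine and strictly increasing on $[a,b]$, all the level-set manipulations reduce to inverting a linear function on this interval.

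First I would handle the optimistic value. By the reformulated Definition~\ref{def.11}(I), $\xi_{\sup}(\alpha)=\sup\{r:1-\Phi_u(r)\ge\alpha\}$. Because $\Phi_u(r)=0$ for $r\le a$ and $\Phi_u(r)=1$ for $r\ge b$, the condition $1-\Phi_u(r)\ge\alpha$ with $\alpha\in(0,1)$ is automatic on $(-\infty,a]$ and fails on $[b,\infty)$, so the supremum is attained inside $[a,b]$. On this interval the inequality $1-\tfrac{r-a}{b-a}\ge\alpha$ is equivalent to $r\le \alpha a+(1-\alpha)b$, and the supremum is exactly $\alpha a+(1-\alpha)b$. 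The pessimistic value is symmetric: the condition $\Phi_u(r)\ge\alpha$ is $\tfrac{r-a}{b-a}\ge\alpha$ on $[a,b]$, giving $r\ge(1-\alpha)a+\alpha b$, and hence $\xi_{\inf}(\alpha)=(1-\alpha)a+\alpha b$.

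For the expected value I would use the density-form identity noted in the remark following the reformulation, namely $\xi_{exp}=\int_{-\infty}^{\infty}r\,\Phi'_u(r)\,dr$. Since $\Phi'_u(r)=\tfrac{1}{b-a}$ on $(a,b)$ and vanishes elsewhere, this reduces to
\begin{equation*}
\xi_{exp}=\frac{1}{b-a}\int_a^b r\,dr=\frac{1}{b-a}\cdot\frac{b^2-a^2}{2}=\frac{a+b}{2}.
\end{equation*}

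The argument is essentially routine once one commits to the density form for the expected value, so the only mild obstacle is a bookkeeping one: if instead one insists on evaluating $\int_0^\infty(1-\Phi_u(r))\,dr-\int_{-\infty}^0\Phi_u(r)\,dr$ directly, one must split into the cases $0\le a<b$, $a<0<b$, and $a<b\le 0$ and verify that all three yield $(a+b)/2$. Using the density form (justified by integration by parts in the earlier remark) sidesteps this case analysis entirely, which is the cleanest route.
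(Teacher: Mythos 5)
Your derivation is correct. Note that the paper itself does not prove this theorem --- it is quoted from Liu (2015) and Yang et al.\ (2015) --- so there is no in-paper argument to compare against; your proof is the standard one and uses exactly the machinery the paper sets up immediately beforehand (the measure inversion theorem, the reformulated critical values, and the remark giving $\xi_{exp}=\int_{-\infty}^{\infty}r\,\Phi'_u(r)\,dr$). All three computations check out, and your observation that the density form avoids the sign-of-$a,b$ case split in the direct evaluation of $\int_0^\infty(1-\Phi_u(r))\,dr-\int_{-\infty}^0\Phi_u(r)\,dr$ is a sensible economy.
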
 
\par It is observed that if the parameters of an uncertain variable are random variables instead of constant values, then the critical values of the uncertain variable are random variables instead of constant values. Based on this concept, we transform the uncertain random variable into a random variable using the critical values of uncertainty.  In particular, if $\tilde{\xi}\sim \mathcal{L}(A,B)$ is a linear-normal uncertain random variable, then based on Theorem \ref{thm.4}, the transformed random variables using optimistic, pessimistic, and expected value criteria are as follows:
\begin{description}
	\item[ I.] $\tilde\xi_{\sup}(\alpha)=\alpha A+(1-\alpha)B,\alpha\in (0,1);$
	\item[ II.] $\tilde\xi_{\inf}(\alpha)=(1-\alpha)A+\alpha B,\alpha\in (0,1);$
	\item[ III.] $\tilde\xi_{exp}=\frac{A+B}{2}.$
\end{description} 
\par The main aim is to characterize the random variables $\tilde\xi_{\sup}(\alpha),$ $\tilde\xi_{\inf}(\alpha),$ and $\tilde\xi_{exp}.$ The characteristic function of a random variable is an important concept and plays a powerful role in this regard. Here is the definition of the characteristic function of a random variable.
\begin{definition}\cite{Liu 2004}\label{def.12}
	If $\omega$ is a random variable with probability distribution $\Phi_p$, then its characteristic function is defined as $\chi_{\omega}(t)=E[e^{it\omega }]=\int\limits_{-\infty}^{\infty}e^{itr}{\Phi'_p}(r)dr,t\in \mathbb{R},i=\sqrt{-1}.$
\end{definition} 
\par Using the characteristic function, we characterize the transformed random variables $\tilde\xi_{\sup}(\alpha),$ $\tilde\xi_{\inf}(\alpha),$ and $\tilde\xi_{exp}.$ The following theorem is helpful in this regard.
\begin{theorem}\label{thm.5}
If $\tilde{\xi}\sim \mathcal{L}(A,B)$ is a linear-normal uncertain random variable where $A\sim \mathcal{N}(\mu_A,\sigma_A)$ and $B\sim \mathcal{N}(\mu_B,\sigma_B)$ are two independent normal random variables, then the following results hold:
	\begin{description}
		\item[ I.] $\tilde\xi_{\sup}(\alpha)\sim \mathcal{N}\big(\alpha\mu_A+(1-\alpha)\mu_B,\sqrt{\alpha^2\sigma_A^2+(1-\alpha)^2\sigma_B^2}\big),\alpha\in (0,1);$
		\item[ II.] $\tilde\xi_{\inf}(\alpha)\sim \mathcal{N}\big((1-\alpha)\mu_A+\alpha\mu_B,\sqrt{(1-\alpha)^2\sigma_A^2+\alpha^2\sigma_B^2}\big),\alpha\in (0,1);$
		\item[ III.] $\tilde\xi_{exp}\sim \mathcal{N}\big(\frac{\mu_A+\mu_B}{2},\frac{\sqrt{\sigma_A^2+\sigma_B^2}}{2}\big).$
	\end{description} 
\end{theorem}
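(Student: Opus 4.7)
The plan is to exploit the characteristic function machinery introduced in Definition \ref{def.12} together with the elementary fact that a normal random variable $\omega\sim\mathcal{N}(\mu,\sigma)$ has characteristic function $\chi_\omega(t)=\exp(it\mu - t^2\sigma^2/2)$. Since each of the three transformed variables $\tilde\xi_{\sup}(\alpha)$, $\tilde\xi_{\inf}(\alpha)$, $\tilde\xi_{exp}$ is a deterministic linear combination $cA+dB$ of the independent normal random variables $A$ and $B$, the strategy is the same in all three cases: compute the characteristic function of $cA+dB$, recognize it as that of a normal random variable with the claimed mean and standard deviation, and then invoke the uniqueness of characteristic functions to conclude.

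Concretely, I would first treat case I. Write $\tilde\xi_{\sup}(\alpha)=\alpha A+(1-\alpha)B$ and use independence of $A$ and $B$ to factor
\begin{equation*}
\chi_{\tilde\xi_{\sup}(\alpha)}(t)=E\bigl[e^{it(\alpha A+(1-\alpha)B)}\bigr]=\chi_A(\alpha t)\,\chi_B((1-\alpha)t).
\end{equation*}
Substituting the Gaussian characteristic functions of $A$ and $B$ and collecting the exponents gives
\begin{equation*}
\chi_{\tilde\xi_{\sup}(\alpha)}(t)=\exp\!\Bigl(it\bigl[\alpha\mu_A+(1-\alpha)\mu_B\bigr]-\tfrac{t^2}{2}\bigl[\alpha^2\sigma_A^2+(1-\alpha)^2\sigma_B^2\bigr]\Bigr),
\end{equation*}
which is exactly the characteristic function of $\mathcal{N}\bigl(\alpha\mu_A+(1-\alpha)\mu_B,\sqrt{\alpha^2\sigma_A^2+(1-\alpha)^2\sigma_B^2}\bigr)$. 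The uniqueness of the characteristic function (a standard result in probability theory) then yields I.

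Cases II and III follow identically with the obvious substitutions: take $(c,d)=(1-\alpha,\alpha)$ for $\tilde\xi_{\inf}(\alpha)$ and $(c,d)=(1/2,1/2)$ for $\tilde\xi_{exp}$, repeat the factorization, and read off the resulting Gaussian parameters. I do not foresee a genuine obstacle here; the only subtle point worth verifying explicitly is that $\alpha\in(0,1)$ guarantees the scale parameters $\sqrt{\alpha^2\sigma_A^2+(1-\alpha)^2\sigma_B^2}$ and $\sqrt{(1-\alpha)^2\sigma_A^2+\alpha^2\sigma_B^2}$ are strictly positive (so that the limit case of a degenerate normal is avoided), which is immediate from $\sigma_A,\sigma_B>0$. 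The independence hypothesis on $A$ and $B$, recorded in the remark following Definition \ref{def.10}, is essential since it is precisely what allows the expectation of the product $e^{i\alpha t A}e^{i(1-\alpha)tB}$ to split as the product of expectations.
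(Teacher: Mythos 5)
Your proposal is correct and follows essentially the same route as the paper: both compute the characteristic function of the linear combination $cA+dB$, factor it using independence, substitute the Gaussian characteristic functions, and identify the resulting normal law (with cases II and III handled by the obvious change of coefficients). Your explicit remarks on uniqueness of characteristic functions and on the strict positivity of the scale parameter are minor additions the paper leaves implicit.
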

\begin{proof}
We prove only (I). The proofs of (II) and (III) are similar to the proof of (I).
\par We have $\tilde\xi_{\sup}(\alpha)=\alpha A+(1-\alpha)B,$ where $A\sim \mathcal{N}(\mu_A,\sigma_A)$ and $B\sim \mathcal{N}(\mu_B,\sigma_B)$ are two independent normal random variables. So, the characteristic functions of $A$ and $B$ are $\chi_A(t)=E[e^{itA}]$ and $\chi_B(t)=E[e^{itB}].$ Using Definition \ref{def.12}, we get $\chi_A(t)=e^{i\mu_At-\frac{1}{2}\sigma_A^2t^2}$ and $\chi_B(t)=e^{i\mu_Bt-\frac{1}{2}\sigma_B^2t^2}.$ Now, the characteristic function of the random variable $\tilde\xi_{\sup}(\alpha)$ is calculated as 
	\begin{equation*} 
	\begin{aligned}
		\chi_{\tilde\xi_{\sup}}(t;\alpha)&=E\big[e^{it\tilde\xi_{\sup}(\alpha)}\big]\\
		&=E\bigg[e^{it\big(\alpha A+(1-\alpha)B\big)}\bigg]\\
		&=E\big[e^{it\alpha A}\cdot e^{it(1-\alpha)B}\big]\\
		&=E\big[e^{it\alpha A}\big]E\big[ e^{it(1-\alpha)B}\big]\\
		&=\chi_A(\alpha t)\cdot\chi_B((1-\alpha)t)\\
		&=e^{i\mu_A(\alpha t)-\frac{1}{2}\sigma_A^2(\alpha t)^2}\cdot e^{i\mu_B((1-\alpha)t)-\frac{1}{2}\sigma_B^2((1-\alpha)t)^2}\\
		&=e^{i(\alpha \mu_A+(1-\alpha)\mu_B)t-\frac{1}{2}(\alpha^2\sigma_A^2+(1-\alpha)^2\sigma_B^2)t^2}.
	\end{aligned}
\end{equation*}
Therefore, the characteristic function of $\tilde\xi_{\sup}(\alpha)$ is $\chi_{\tilde\xi_{\sup}}(t;\alpha)=e^{i(\alpha \mu_A+(1-\alpha)\mu_B)t-\frac{1}{2}(\alpha^2\sigma_A^2+(1-\alpha)^2\sigma_B^2)t^2}.$ This shows that $\tilde\xi_{\sup}(\alpha)\sim \mathcal{N}\big(\alpha\mu_A+(1-\alpha)\mu_B,\sqrt{\alpha^2\sigma_A^2+(1-\alpha)^2\sigma_B^2}\big),$ which completes the proof.
\end{proof}
\par It is observed that the parameters of the transformed random variables $\tilde\xi_{\sup}(\alpha),$ $\tilde\xi_{\inf}(\alpha),$ and $\tilde\xi_{exp}$ are known. So, we can visualize their corresponding probability distribution functions. The probability distributions of $\tilde\xi_{\sup}(\alpha),$ $\tilde\xi_{\inf}(\alpha),$ and $\tilde\xi_{exp}$ are as follows:
	\begin{description}
	\item[ I.] \[\Phi_{\tilde\xi_{\sup}}(x;\alpha)=\frac{1}{2}\bigg[1+\text{erf}\bigg(\frac{x-\alpha\mu_A-(1-\alpha)\mu_B}{\sqrt{2\alpha^2\sigma_A^2+2(1-\alpha)^2\sigma_B^2}}\bigg)\bigg], x\in\mathbb{R},\alpha\in (0,1);
	\]
	\item[ II.] \[\Phi_{\tilde\xi_{\inf}}(x;\alpha)=\frac{1}{2}\bigg[1+\text{erf}\bigg(\frac{x-(1-\alpha)\mu_A-\alpha\mu_B}{\sqrt{2(1-\alpha)^2\sigma_A^2+2\alpha^2\sigma_B^2}}\bigg)\bigg], x\in\mathbb{R},\alpha\in (0,1);
	\]
	\item[ III.] \[\Phi_{\tilde\xi_{exp}}(x)=\frac{1}{2}\bigg[1+\text{erf}\bigg(\frac{2x-\mu_A-\mu_B}{\sqrt{2\sigma_A^2+2\sigma_B^2}}\bigg)\bigg], x\in\mathbb{R}.
	\]
\end{description} 
\par To understand the efficiency of the transformation methods, we provide an example of a linear-normal uncertain random variable below.
\begin{example}
	If $\tilde{\xi}\sim\mathcal{LN}(A,B)$ is a linear-normal uncertain random variable where $A\sim \mathcal{N}(3,1)$ and $B\sim \mathcal{N}(2,1)$ are two independent normal random variables, then
	\begin{description}
		\item[I.] the transformed random variable via $\alpha$ optimistic value criteria is $\tilde\xi_{\sup}(\alpha)\sim \mathcal{N}(\alpha+2,\sqrt{\alpha^2+(1-\alpha)^2})$ and its corresponding probability distribution is 
		\[\Phi_{\tilde\xi_{\sup}}(x;\alpha)=\frac{1}{2}\bigg[1+\text{erf}\bigg(\frac{x-\alpha-2}{\sqrt{2\alpha^2+2(1-\alpha)^2}}\bigg)\bigg], x\in\mathbb{R},\alpha\in (0,1);
		\]
		\item[II.] the transformed random variable via $\alpha$ pessimistic value criteria is $\tilde\xi_{\inf}(\alpha)\sim \mathcal{N}(3-\alpha,\sqrt{\alpha^2+(1-\alpha)^2})$ and its corresponding probability distribution is 
		\[\Phi_{\tilde\xi_{\inf}}(x;\alpha)=\frac{1}{2}\bigg[1+\text{erf}\bigg(\frac{x+\alpha-3}{\sqrt{2\alpha^2+2(1-\alpha)^2}}\bigg)\bigg], x\in\mathbb{R},\alpha\in (0,1);
		\]
		\item[III.] the transformed random variable via expected value criteria is $\tilde\xi_{exp}\sim \mathcal{N}(\frac{5}{2},\frac{\sqrt{2}}{2})$ and its corresponding probability distribution is 
		\[\Phi_{\tilde\xi_{exp}}(x)=\frac{1}{2}\bigg[1+\text{erf}\bigg(\frac{2x-5}{2}\bigg)\bigg], x\in\mathbb{R}.
		\]
	\end{description}
\end{example}
 Utilizing the optimistic, pessimistic, and expected value criteria, we depict the transformed probability distribution of the linear-normal uncertain random variable $\tilde{\xi}\sim\mathcal{LN}(A,B)$ in Figure \ref{fig:1}, Figure \ref{fig:2}, and Figure \ref{fig:3}, correspondingly, where $A\sim \mathcal{N}(3,1)$ and $B\sim \mathcal{N}(2,1)$. Figure \ref{fig:1} and Figure \ref{fig:2} are presented as solid illustrations because they essentially represent bivariate functions involving both $x$ and $\alpha$. To offer a clear visualization for different values of $\alpha\in (0,1)$, we present the transformed probability distributions using the optimistic and pessimistic value criteria in Figure \ref{fig:4} and Figure \ref{fig:5} as two-dimensional plots.
 \begin{remark}
 	The optimistic value criteria transformation method for a fixed $\alpha\in(0,1)$ is equivalent to the pessimistic value criteria transformation method for $(1-\alpha)$. In particular, at $\alpha = 0.5,$ the optimistic value criteria and pessimistic value criteria transformation methods are equivalent to the expected value criteria transformation method.
 \end{remark}
 \begin{figure}[htb] 
 	\centering 
 	\includegraphics[scale=0.2]{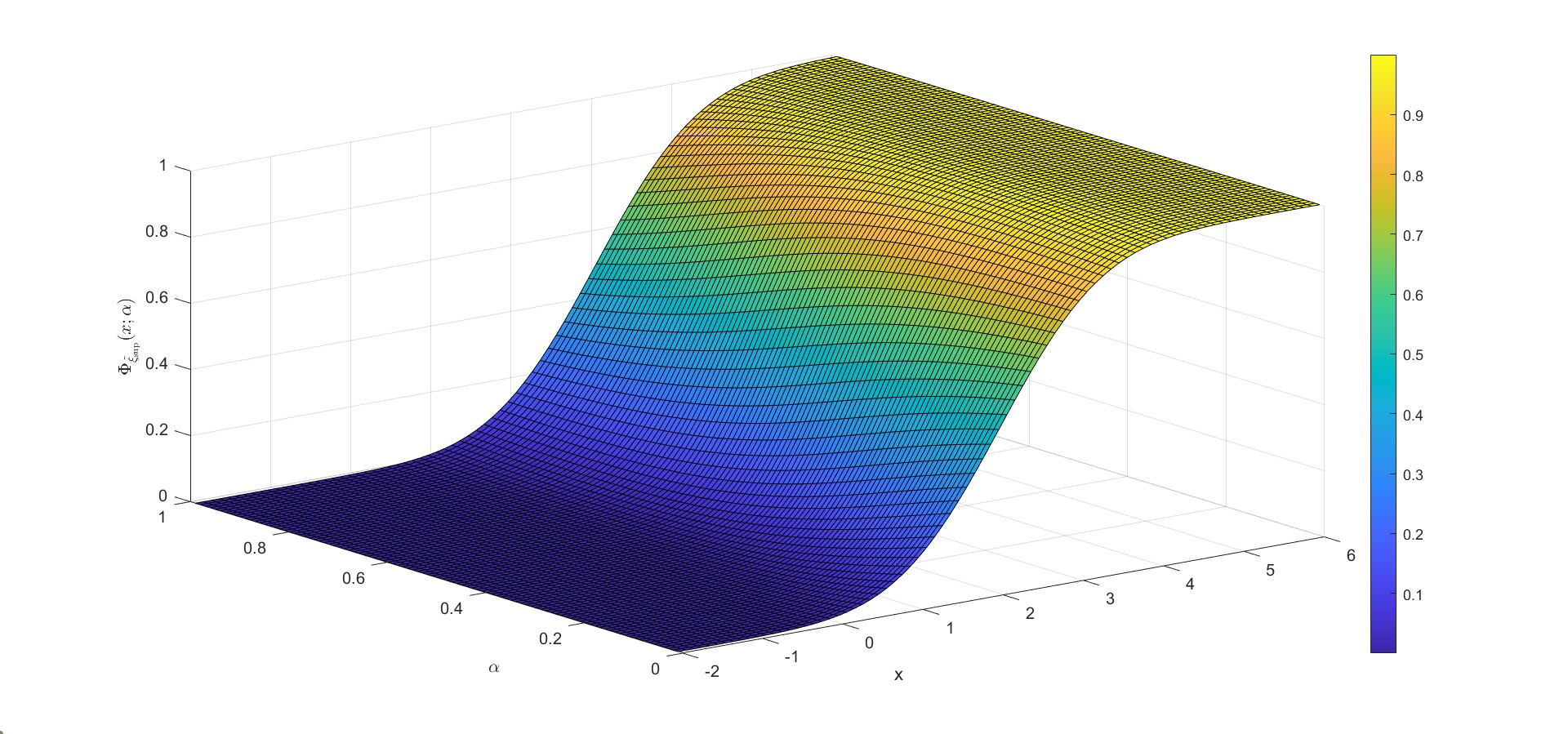} 
 	\caption{Transformed probability distribution of a linear-normal uncertain random variable via $\alpha$ optimistic value criteria.}
 	\label{fig:1}
 \end{figure}
 \begin{figure}[htb] 
 	\centering 
 	\includegraphics[scale=0.2]{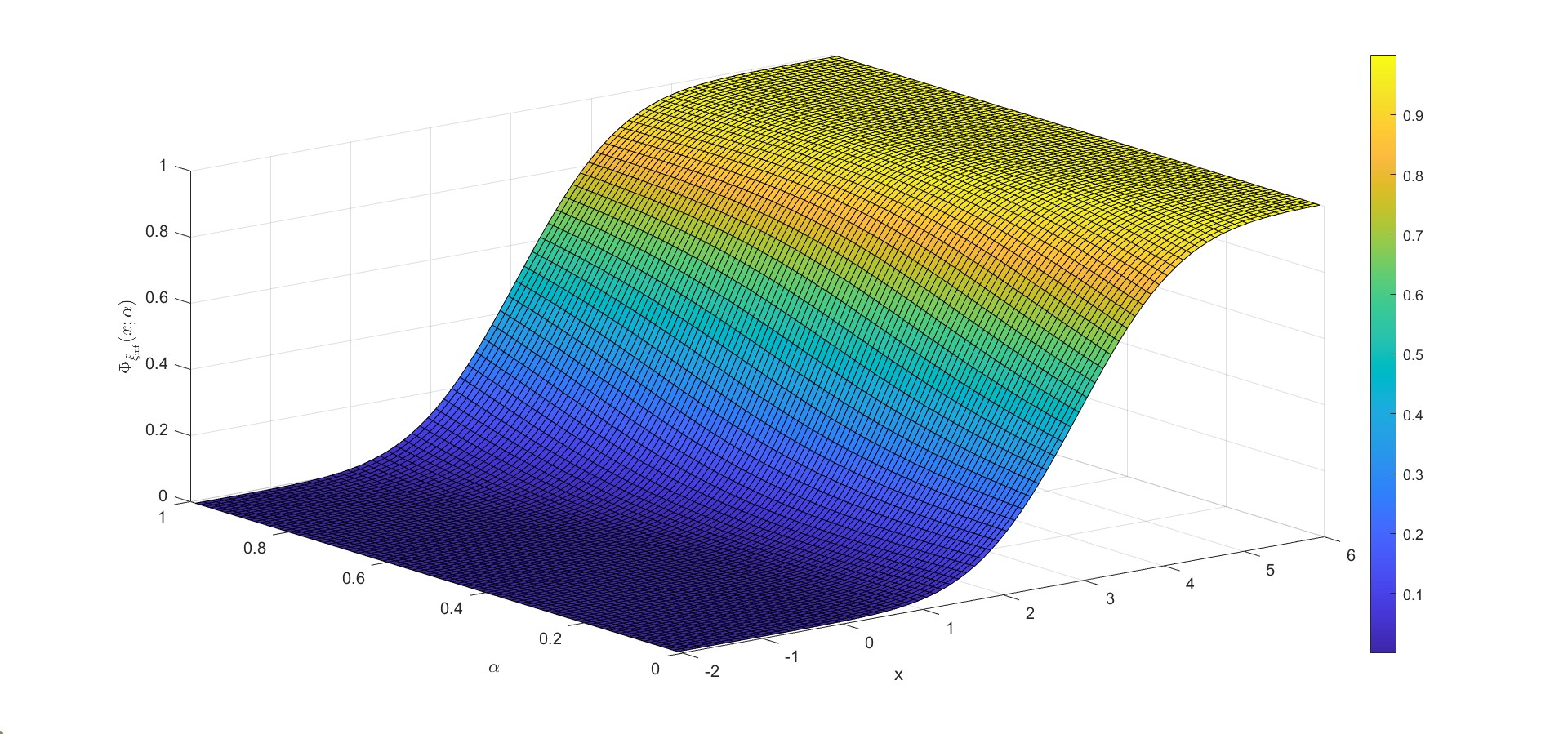} 
 	\caption{Transformed probability distribution of a linear-normal uncertain random variable via $\alpha$ pessimistic value criteria.}
 	\label{fig:2}
 \end{figure}
 \begin{figure}[htb] 
 	\centering 
 	\includegraphics[scale=0.2]{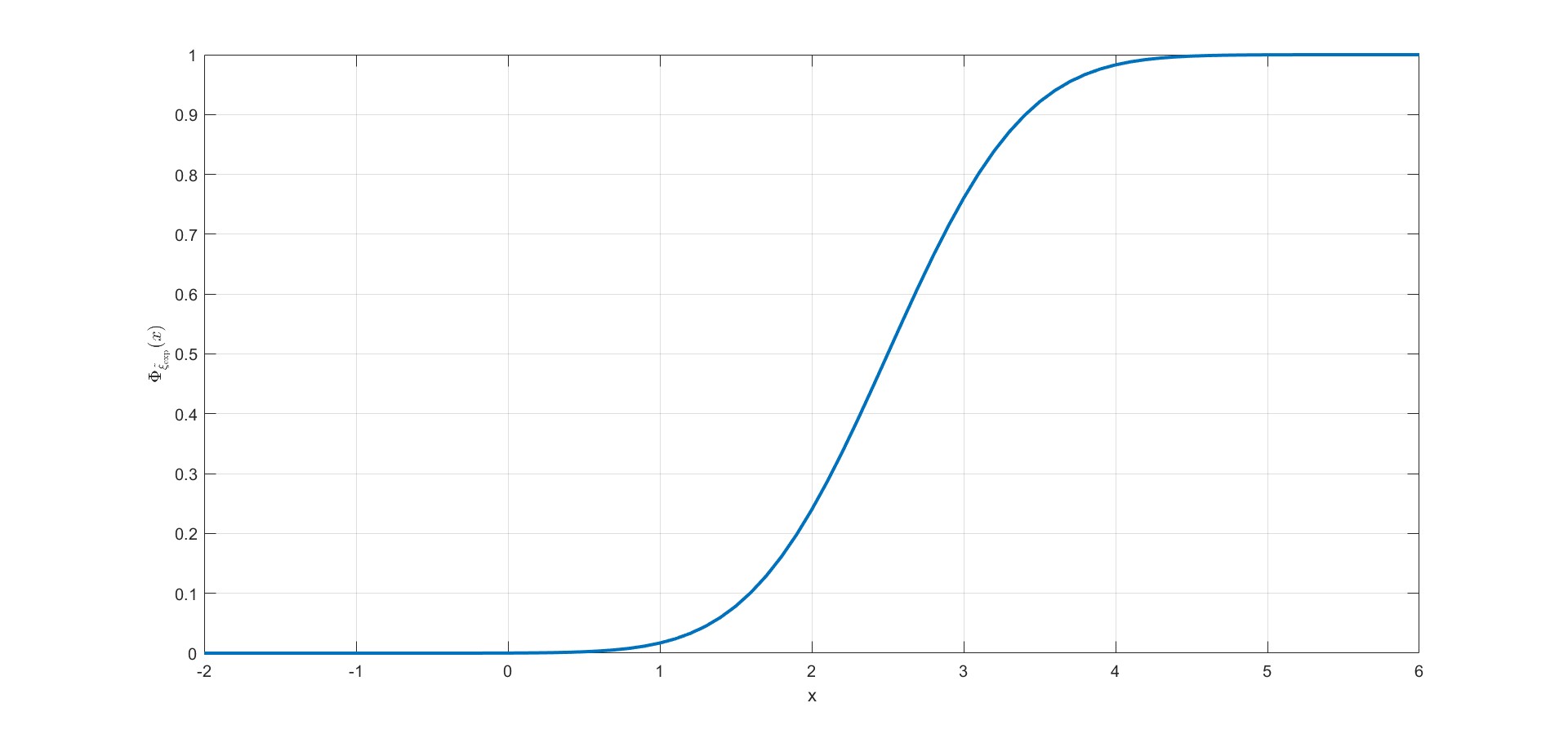} 
 	\caption{Transformed probability distribution of a linear-normal uncertain random variable via expected value criteria.}
 	\label{fig:3}
 \end{figure}
 \begin{figure}[htb]
 	\centering
 	\includegraphics[scale=0.2]{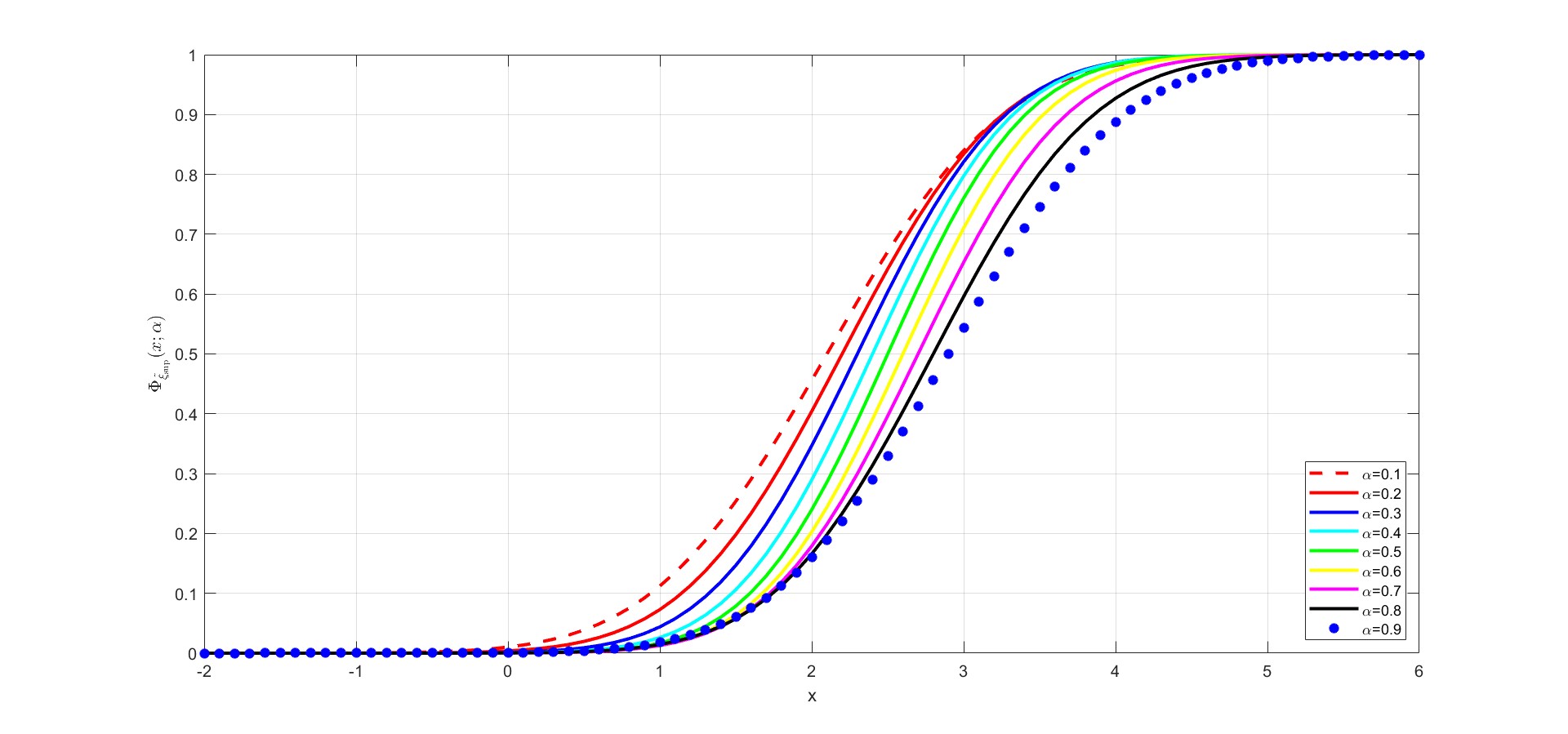}
 	\caption{Transformed probability distributions of a linear-normal uncertain random variable via optimistic value criteria for different values of $\alpha$.}
 	\label{fig:4}
 \end{figure}
 \begin{figure}[htb]
 	\centering
 	\includegraphics[scale=0.2]{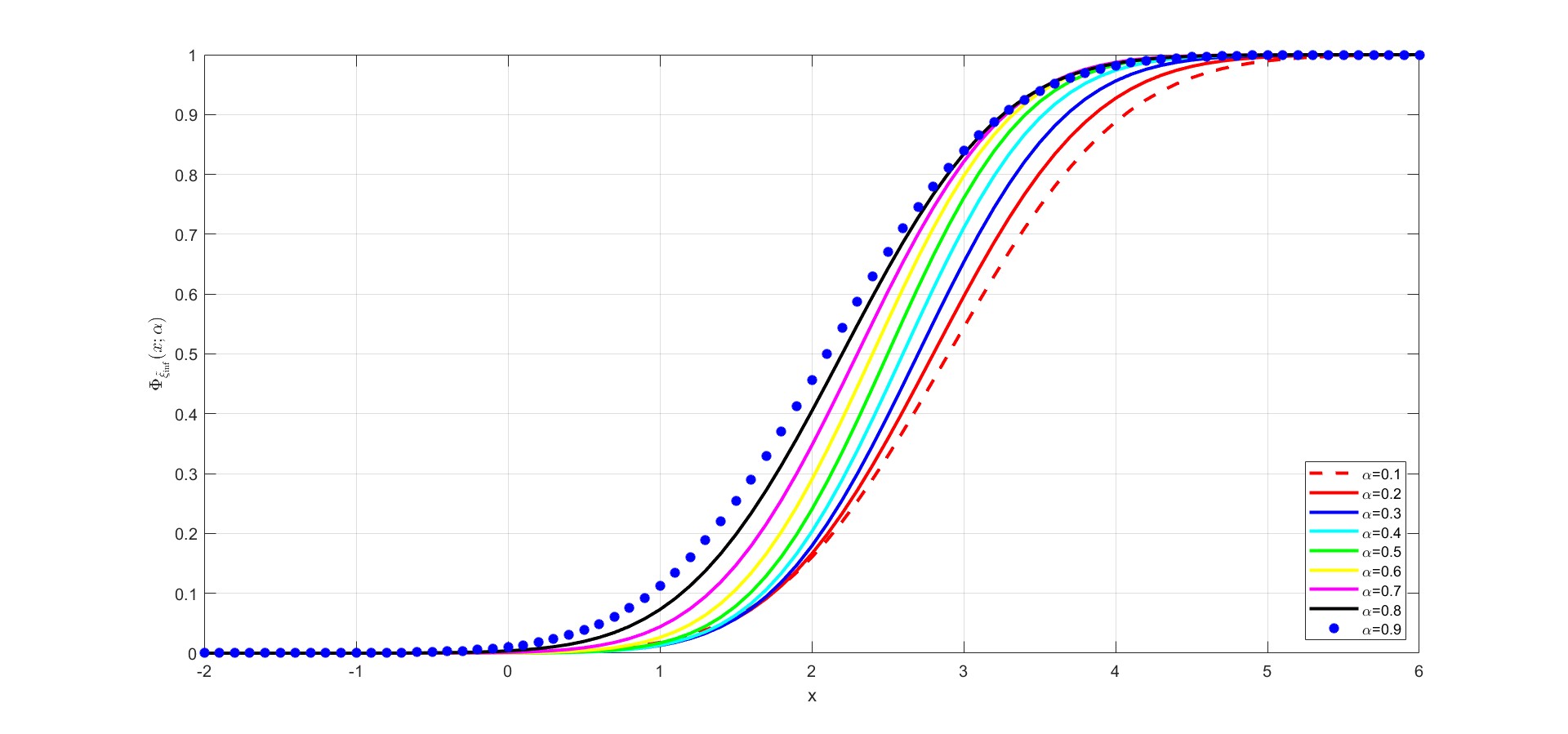}
 	\caption{Transformed probability distributions of a linear-normal uncertain random variable via pessimistic value criteria for different values of $\alpha$.}
 	\label{fig:5}
 \end{figure} 
\section{ Geometric programming problem with linear-normal uncertain random variable coefficients }\label{sec.4} 
	A conventional GP problem involves addressing a minimization problem where both the objective function and the constraints exhibit posynomial characteristics. Problems within the GP framework, in which all coefficients and decision variables are positive except for the exponents, are categorized as posynomial problems. A deterministic GP problem is outlined as
\begin{equation}\label{eq.1}
	\begin{aligned}
		&\min \quad f_0(\textbf{x})=\sum\limits_{i=1}^{N_0}\beta_{i0}\prod\limits_{j=1}^{n}x_j^{\alpha_{0ij}}\\
		&\text{s.t.}\\
		& \qquad f_k(\textbf{x})=\sum\limits_{i=1}^{N_k}\beta_{ik}\prod\limits_{j=1}^{n}x_j^{\alpha_{kij}}\leq 1,k=1,2,\ldots K,
	\end{aligned}
\end{equation}
where $\beta_{ik}>0, x_j>0, \alpha_{kij} \in \mathbb{R},\forall i,j,k$.
In this context, we have $N_0$ representing the total number of terms in the objective function, and $N_k$ representing the total number of terms in the $k^{th}$ constraint of Problem \ref{eq.1}. To formulate the dual for Problem \ref{eq.1}, let us introduce the variable $N$ as the total number of terms present in Problem \ref{eq.1}, given by $N=\sum\limits_{k=0}^{K}N_k$. Additionally, we use $\delta_{ik}$ as the dual variables, and $\lambda_k$ as the sum of dual variables present in the $k^{th}$ constraint, defined as $\lambda_k=\sum\limits_{i=1}^{N_k}{\delta_{ik}}$, where $k=0,1,2,\ldots,K$. Consequently, the dual problem for Problem \ref{eq.1} can be expressed as
\begin{equation}\label{eq.2}
	\begin{aligned}
		&	\max \quad V(\delta)=\prod_{i=1}^{N}\bigg(\frac{\beta_{ik}}{{\delta_{ik}}}\bigg)^{{\delta_{ik}}}\bigg(\lambda_k\bigg)^{\lambda_k}\\
		&	\text{s.t.}\\
		& \qquad \sum\limits_{i=1}^{N_0}{\delta_{i0}}=\lambda_0=1, \quad (\text{Normality condition})\\
		& \qquad\sum\limits_{i=1}^{N}{\delta_{ik}}{\alpha_{kij}}=0,j=1,2,\ldots,n. \quad (\text{Orthogonality conditions})
	\end{aligned}
\end{equation}
\par  In a GP problem, the primal problem is converted into its dual problem, which is easier to solve than the primal problem. The relationship between primal and dual problems is useful to find the primal decision variables. The relationship between primal and dual GP problems due to strong duality theorem \cite{Duffin 1967,Duffin 1973} is as follows:
\begin{equation}\label{eq.3}
	\begin{aligned}
		&\sum\limits_{j=1}^n{\alpha_{0ij} } \ln(x_j)=\ln \bigg(\frac{{\delta_{i0}}f_0(\textbf{x}) }{\beta_{i0}}\bigg), i=1,2,\ldots,N_{0},\\
		&\sum\limits_{j=1}^n{\alpha_{kij} } \ln(x_j)=\ln \bigg(\frac{\delta_{ik}} { \lambda_k \beta_{ik}}\bigg),i=1,2,\ldots,N_{k},k=1,2,\ldots,K.
	\end{aligned}
\end{equation}
\begin{remark}
	In a GP problem, the total number of terms minus the total number of decision variables minus one is known as the degree of difficulty. In Problem \ref{eq.1} and its dual given in Problem \ref{eq.2}, the degree of difficulty is $D=N-n-1$. Depending on $D$, we have the following two cases.
\begin{description}
	\item[I.] If $D \geq 0$, then Problem \ref{eq.2} is feasible. Because in this case, the number of equations is less than or equal to the number of dual variables. This guarantees the existence of a solution to Problem \ref{eq.2}. 
		\item[II.] On the other hand, if $D<0$, then Problem \ref{eq.2} is inconsistent. Because in this case, the number of equations is greater than the number of dual variables. It guarantees that there is no analytical solution to the dual problem. However, Sinha and Biswal \cite{sinha 1987} found an approximate solution to the dual problem with a negative degree of difficulty using the least squares method and the linear programming method.
\end{description} 
\end{remark}
\par In this section, our main aim is to develop an equivalent deterministic form of a GP problem in which the coefficients of the objective and constraint functions are linear-normal uncertain random variables. First, we consider the uncertain random GP problem as 
\begin{equation}\label{eq.4}
	\begin{aligned}
		&	\min \quad \tilde{f}_0{(\textbf{x})}=\sum\limits_{i=1}^{N_0}{\tilde{\beta}_{i0}}\prod\limits_{j=1}^{n}x_j^{\alpha_{0ij}}\\
		&	\text{s.t.}\\
		& \qquad \tilde{f}_k{(\textbf{x})}=\sum\limits_{i=1}^{N_k}{\tilde{\beta}_{ik}}\prod\limits_{j=1}^{n}x_j^{\alpha_{kij}}\leq 1,k=1,2,\ldots K,
	\end{aligned}
\end{equation}
where the coefficients ${\tilde{\beta}_{ik}}\sim \mathcal{LN}(A_{ik},B_{ik})$ are independent linear-normal uncertain random variables, $A_{ik}\sim\mathcal{N}(\mu_{A_{ik}},\sigma_{A_{ik}})$ and $B_{ik}\sim\mathcal{N}(\mu_{B_{ik}},\sigma_{B_{ik}})$ are independent normal random variables, $x_j>0$, $\alpha_{kij} \in \mathbb{R},\forall i,j,k$. To develop the equivalent deterministic form of Problem \ref{eq.4}, we first transform uncertain random variable coefficients ${\tilde{\beta}_{ik}}\sim \mathcal{LN}(A_{ik},B_{ik})$ into random variable coefficients based on Theorem \ref{thm.5}. For instance, Problem \ref{eq.4} is converted into stochastic GP problems via optimistic, pessimistic, and expected value criteria. Here are the transformations of Problem \ref{eq.4} into stochastic GP problems.
\begin{description}
	\item[I.] The transformation of Problem \ref{eq.4} into the stochastic GP problem via $\alpha$ optimistic value criteria is 
	\begin{equation}\label{eq.5}
		\begin{aligned}
			&	\min \quad \sum\limits_{i=1}^{N_0}{\tilde{\beta}}_{{i0}_{\sup}}(\alpha)\prod\limits_{j=1}^{n}x_j^{\alpha_{0ij}}\\
			&	\text{s.t.}\\
			& \qquad \sum\limits_{i=1}^{N_k}{\tilde{\beta}}_{{ik}_{\sup}}(\alpha)\prod\limits_{j=1}^{n}x_j^{\alpha_{kij}}\leq 1,k=1,2,\ldots K,
		\end{aligned}
	\end{equation}
where ${\tilde{\beta}}_{{ik}_{\sup}}(\alpha)\sim \mathcal{N}\big(\alpha\mu_{A_{ik}}+(1-\alpha)\mu_{B_{ik}},\sqrt{\alpha^2\sigma_{A_{ik}}^2+(1-\alpha)^2\sigma_{B_{ik}}^2}\big)$ are independent normal random variables, $\alpha\in (0,1)$.
\item[II.] The transformation of Problem \ref{eq.4} into the stochastic GP problem via $\alpha$ pessimistic value criteria is 
\begin{equation}\label{eq.6}
	\begin{aligned}
		&	\min \quad \sum\limits_{i=1}^{N_0}{\tilde{\beta}}_{{i0}_{\inf}}(\alpha)\prod\limits_{j=1}^{n}x_j^{\alpha_{0ij}}\\
		&	\text{s.t.}\\
		& \qquad \sum\limits_{i=1}^{N_k}{\tilde{\beta}}_{{ik}_{\inf}}(\alpha)\prod\limits_{j=1}^{n}x_j^{\alpha_{kij}}\leq 1,k=1,2,\ldots K,
	\end{aligned}
\end{equation}
where ${\tilde{\beta}}_{{ik}_{\inf}}(\alpha)\sim \mathcal{N}\big((1-\alpha)\mu_{A_{ik}}+\alpha\mu_{B_{ik}},\sqrt{(1-\alpha)^2\sigma_{A_{ik}}^2+\alpha^2\sigma_{B_{ik}}^2}\big)$ are independent normal random variables, $\alpha\in (0,1)$.
\item[III.] The transformation of Problem \ref{eq.4} into the stochastic GP problem via expected value criteria is 
\begin{equation}\label{eq.7}
	\begin{aligned}
		&	\min \quad \sum\limits_{i=1}^{N_0}{\tilde{\beta}}_{{i0}_{exp}}\prod\limits_{j=1}^{n}x_j^{\alpha_{0ij}}\\
		&	\text{s.t.}\\
		& \qquad \sum\limits_{i=1}^{N_k}{\tilde{\beta}}_{{ik}_{exp}}\prod\limits_{j=1}^{n}x_j^{\alpha_{kij}}\leq 1,k=1,2,\ldots K,
	\end{aligned}
\end{equation}
where ${\tilde{\beta}}_{{ik}_{exp}}\sim \mathcal{N}\bigg(\frac{\mu_{A_{ik}}+\mu_{B_{ik}}}{2},\frac{\sqrt{\sigma_{A_{ik}}^2+\sigma_{B_{ik}}^2}}{2}\bigg)$ are independent normal random variables.
\end{description}
We use probabilistic constraints with a tolerance level $\epsilon \in (0,0.5].$ Then Problem \ref{eq.5} becomes 
\begin{equation}\label{eq.8}
	\begin{aligned}
		&	\min x_0 \\
		&	\text{s.t.}\\
		& \qquad Pr\bigg(\sum\limits_{i=1}^{N_k}{\tilde{\beta}}_{{ik}_{\sup}}(\alpha)\prod\limits_{j=1}^{n}x_j^{\alpha_{kij}}\leq 1\bigg)\ge 1-\epsilon,k=1,2,\ldots K,\\
		& \qquad Pr\bigg(\sum\limits_{i=1}^{N_0}{\tilde{\beta}}_{{i0}_{\sup}}(\alpha)\prod\limits_{j=1}^{n}x_j^{\alpha_{0ij}}x_0^{-1}\le1\bigg)\ge 1-\epsilon,
	\end{aligned}
\end{equation}
Problem \ref{eq.6} becomes
\begin{equation}\label{eq.9}
	\begin{aligned}
		&	\min \quad x_0\\ 
		&	\text{s.t.}\\
		& \qquad Pr\bigg(\sum\limits_{i=1}^{N_k}{\tilde{\beta}}_{{ik}_{\inf}}(\alpha)\prod\limits_{j=1}^{n}x_j^{\alpha_{kij}}\leq 1\bigg)\ge1-\epsilon,k=1,2,\ldots K,\\
		& \qquad Pr\bigg(\sum\limits_{i=1}^{N_0}{\tilde{\beta}}_{{i0}_{\inf}}(\alpha)\prod\limits_{j=1}^{n}x_j^{\alpha_{0ij}}x_0^{-1}\le1\bigg)\ge 1-\epsilon,
	\end{aligned}
\end{equation}
Problem \ref{eq.7} becomes
\begin{equation}\label{eq.10}
	\begin{aligned}
		&	\min \quad x_0 \\
		&	\text{s.t.}\\
		& \qquad Pr\bigg(\sum\limits_{i=1}^{N_k}{\tilde{\beta}}_{{ik}_{exp}}\prod\limits_{j=1}^{n}x_j^{\alpha_{kij}}\leq 1\bigg)\ge 1-\epsilon,k=1,2,\ldots K,\\
		& \qquad Pr\bigg(\sum\limits_{i=1}^{N_0}{\tilde{\beta}}_{{i0}_{exp}}\prod\limits_{j=1}^{n}x_j^{\alpha_{0ij}}x_0^{-1}\le 1\bigg)\ge 1-\epsilon.
	\end{aligned}
\end{equation}
Now, the main focus is to find an equivalent deterministic form of a probabilistic constraint. The following theorem is very useful in this regard.
\begin{theorem}\label{thm.6}
		If $\xi_l\sim\mathcal{N}(\mu_l,\sigma_l)$ with $l=1,2,\ldots,L$ are independent normal random variables, and $U_l$ with $l=1,2,\ldots,L$ are nonnegative variables, then for every tolerance level $\epsilon \in(0,0.5]$, 
		\begin{align*}
			Pr\bigg(\sum\limits_{l=1}^{L}\xi_lU_l\le 1\bigg)\ge 1-\epsilon
			\end{align*}
		is equivalent to
		\begin{align*}
			\sum\limits_{l=1}^{L}\mu_lU_l+\Phi^{-1}(1-\epsilon)\sqrt{\sum\limits_{l=1}^{L}\sigma_l^2U_l^2}\le 1,
		\end{align*}
	where $\Phi^{-1}(1-\epsilon)=\sqrt{2}\text{erf}^{-1}(1-2\epsilon)$ is the inverse of the standard normal distribution function.
	\end{theorem}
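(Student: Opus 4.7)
The plan is to exploit the closure of independent normal random variables under linear combinations. Since each $\xi_l\sim\mathcal{N}(\mu_l,\sigma_l)$ and the $U_l$ are treated as (nonnegative) deterministic weights within the probability, the sum $Y=\sum_{l=1}^{L}\xi_lU_l$ is itself normally distributed. Using independence, its mean is $\mu_Y=\sum_{l=1}^{L}\mu_lU_l$ and its variance is $\sigma_Y^2=\sum_{l=1}^{L}\sigma_l^2U_l^2$. This is the observation that unlocks the whole argument, because it lets me evaluate $Pr(Y\le 1)$ explicitly via the standard normal distribution $\Phi$.

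Next I would standardize. Assuming momentarily that $\sigma_Y>0$, I write
\begin{equation*}
Pr\bigl(Y\le 1\bigr)=Pr\!\left(\frac{Y-\mu_Y}{\sigma_Y}\le \frac{1-\mu_Y}{\sigma_Y}\right)=\Phi\!\left(\frac{1-\mu_Y}{\sigma_Y}\right),
\end{equation*}
so the chance constraint $Pr(Y\le 1)\ge 1-\epsilon$ is equivalent to $\Phi\bigl((1-\mu_Y)/\sigma_Y\bigr)\ge 1-\epsilon$. Since $\Phi$ is strictly increasing, applying $\Phi^{-1}$ to both sides preserves the inequality and yields $(1-\mu_Y)/\sigma_Y\ge \Phi^{-1}(1-\epsilon)$. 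Multiplying through by $\sigma_Y>0$ (which preserves the direction) and rearranging gives exactly
\begin{equation*}
\sum_{l=1}^{L}\mu_lU_l+\Phi^{-1}(1-\epsilon)\sqrt{\sum_{l=1}^{L}\sigma_l^2U_l^2}\le 1,
\end{equation*}
and I would finish by recalling the identity $\Phi^{-1}(1-\epsilon)=\sqrt{2}\,\text{erf}^{-1}(1-2\epsilon)$ from the closed form of $\Phi$ given in Definition \ref{def.4}.

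The restriction $\epsilon\in(0,0.5]$ plays a role in the interpretation rather than in the algebra: it guarantees $\Phi^{-1}(1-\epsilon)\ge 0$, so the deterministic surrogate is a genuine safety margin added to the mean rather than subtracted. The main obstacle I anticipate is the degenerate case $\sigma_Y=0$, which occurs precisely when $\sigma_lU_l=0$ for every $l$. In that case $Y=\mu_Y$ almost surely, so $Pr(Y\le 1)$ equals $1$ or $0$ depending on whether $\mu_Y\le 1$, while the deterministic inequality collapses to $\sum_l\mu_lU_l\le 1$; thus the two conditions remain equivalent. I would handle this as a brief separate case before the standardization step so that dividing by $\sigma_Y$ is unambiguously justified in the main argument.
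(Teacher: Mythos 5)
Your proof is correct and takes essentially the same route as the paper's: identify $\sum_l\xi_lU_l$ as normal with mean $\sum_l\mu_lU_l$ and variance $\sum_l\sigma_l^2U_l^2$, standardize, and invert $\Phi$. The only differences are cosmetic (the paper first shifts by $1$, working with $Z=\sum_l\xi_lU_l-1$) together with your explicit treatment of the degenerate case $\sigma_Y=0$, which the paper passes over silently when it divides by $\sigma_Z$.
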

\begin{proof}
	Let $Z=\bigg(\sum\limits_{l=1}^{L}\xi_lU_l\bigg)-1.$ Then we have 
	$\mu_Z=E[Z]=E\bigg[\bigg(\sum\limits_{l=1}^{L}\xi_lU_l\bigg)-1\bigg]=\bigg(\sum\limits_{l=1}^{L}E[\xi_l]U_l\bigg)-1=\sum\limits_{l=1}^{L}\mu_lU_l-1$ and $\sigma_Z^2=var(Z)=var\bigg[\bigg(\sum\limits_{l=1}^{L}\xi_lU_l\bigg)-1\bigg]=\bigg(\sum\limits_{l=1}^{L}var[\xi_l]U_l^2\bigg)-0=\sum\limits_{l=1}^{L}\sigma_l^2U_l^2.$ Therefore, we have 
	\begin{equation*}
		\begin{aligned}
		Pr\bigg(\sum\limits_{l=1}^{L}\xi_lU_l\le 1\bigg)\ge 1-\epsilon
		 & \Leftrightarrow Pr(Z\le 0)\ge 1-\epsilon\\
	    & \Leftrightarrow Pr\bigg(\frac{Z-\mu_Z}{\sigma_Z}\le -\frac{\mu_Z}{\sigma_Z}\bigg)\ge 1-\epsilon\\
	    & \Leftrightarrow \Phi\bigg(-\frac{\mu_Z}{\sigma_Z}\bigg)\ge 1-\epsilon \qquad \bigg[\because \frac{Z-\mu_Z}{\sigma_Z}\sim \mathcal{N}(0,1)\bigg]\\ 
	    & \Leftrightarrow -\frac{\mu_Z}{\sigma_Z}\ge \Phi^{-1}(1-\epsilon)\\
	    & \Leftrightarrow \mu_Z+\sigma_Z \Phi^{-1}(1-\epsilon)\le0\\
	    & \Leftrightarrow \sum\limits_{l=1}^{L}\mu_lU_l-1+\Phi^{-1}(1-\epsilon)\sqrt{\sum\limits_{l=1}^{L}\sigma_l^2U_l^2}\le0\\
	    & \Leftrightarrow \sum\limits_{l=1}^{L}\mu_lU_l+\Phi^{-1}(1-\epsilon)\sqrt{\sum\limits_{l=1}^{L}\sigma_l^2U_l^2}\le1.
		\end{aligned}
	\end{equation*}
This completes the proof.
\end{proof}
\par Based on Theorem \ref{thm.6}, we convert each probabilistic constraint into a deterministic constraint. Therefore, converting probabilistic constraints into deterministic constraints and, after simplification, the deterministic form of Problem \ref{eq.8}, Problem \ref{eq.9}, and Problem \ref{eq.10} are derived in Problem \ref{eq.11}, Problem \ref{eq.12}, and Problem \ref{eq.13}, respectively. Problem \ref{eq.11} is
\begin{equation}\label{eq.11}
	\begin{aligned}
		&	\min \sum\limits_{i=1}^{N_0}\big(\alpha\mu_{A_{i0}}+(1-\alpha)\mu_{B_{i0}}\big)\prod\limits_{j=1}^{n}x_j^{\alpha_{0ij}}+\Phi^{-1}(1-\epsilon)\sqrt{\sum\limits_{i=1}^{N_0}\big(\alpha^2\sigma_{A_{i0}}^2+(1-\alpha)^2\sigma_{B_{i0}}^2\big)\prod\limits_{j=1}^{n}x_j^{2\alpha_{0ij}}} \\
		&	\text{s.t.}\\
		& \quad \sum\limits_{i=1}^{N_k}\big(\alpha\mu_{A_{ik}}+(1-\alpha)\mu_{B_{ik}}\big)\prod\limits_{j=1}^{n}x_j^{\alpha_{kij}}+\Phi^{-1}(1-\epsilon)\sqrt{\sum\limits_{i=1}^{N_k}\big(\alpha^2\sigma_{A_{ik}}^2+(1-\alpha)^2\sigma_{B_{ik}}^2\big)\prod\limits_{j=1}^{n}x_j^{2\alpha_{kij}}}\leq 1,\\
		& \quad \text{where } \alpha \in (0,1), \epsilon \in(0,0.5], k=1,2,\ldots K.\\
	\end{aligned}
\end{equation}
Problem \ref{eq.12} is
\begin{equation}\label{eq.12}
	\begin{aligned}
		&	\min \sum\limits_{i=1}^{N_0}\big((1-\alpha)\mu_{A_{i0}}+\alpha\mu_{B_{i0}}\big)\prod\limits_{j=1}^{n}x_j^{\alpha_{0ij}}+\Phi^{-1}(1-\epsilon)\sqrt{\sum\limits_{i=1}^{N_0}\big((1-\alpha)^2\sigma_{A_{i0}}^2+\alpha^2\sigma_{B_{i0}}^2\big)\prod\limits_{j=1}^{n}x_j^{2\alpha_{0ij}}} \\
		&	\text{s.t.}\\
		& \quad \sum\limits_{i=1}^{N_k}\big((1-\alpha)\mu_{A_{ik}}+\alpha\mu_{B_{ik}}\big)\prod\limits_{j=1}^{n}x_j^{\alpha_{kij}}+\Phi^{-1}(1-\epsilon)\sqrt{\sum\limits_{i=1}^{N_k}\big((1-\alpha)^2\sigma_{A_{ik}}^2+\alpha^2\sigma_{B_{ik}}^2\big)\prod\limits_{j=1}^{n}x_j^{2\alpha_{kij}}}\leq 1,\\
		& \quad \text{where } \alpha \in (0,1), \epsilon \in(0,0.5], k=1,2,\ldots K.\\
	\end{aligned}
\end{equation}
Problem \ref{eq.13} is
\begin{equation}\label{eq.13}
	\begin{aligned}
		&	\min \sum\limits_{i=1}^{N_0}\bigg(\frac{\mu_{A_{i0}}+\mu_{B_{i0}}}{2}\bigg)\prod\limits_{j=1}^{n}x_j^{\alpha_{0ij}}+\frac{\Phi^{-1}(1-\epsilon)}{2}\sqrt{\sum\limits_{i=1}^{N_0}\big(\sigma_{A_{i0}}^2+\sigma_{B_{i0}}^2\big)\prod\limits_{j=1}^{n}x_j^{2\alpha_{0ij}}} \\
		&	\text{s.t.}\\
		& \quad \sum\limits_{i=1}^{N_k}\bigg(\frac{\mu_{A_{ik}}+\mu_{B_{ik}}}{2}\bigg)\prod\limits_{j=1}^{n}x_j^{\alpha_{kij}}+\frac{\Phi^{-1}(1-\epsilon)}{2}\sqrt{\sum\limits_{i=1}^{N_k}\big(\sigma_{A_{ik}}^2+\sigma_{B_{ik}}^2\big)\prod\limits_{j=1}^{n}x_j^{2\alpha_{kij}}}\leq 1,\\
		& \quad \text{where } \epsilon \in(0,0.5], k=1,2,\ldots K.\\
	\end{aligned}
\end{equation}
\section{Numerical example}\label{sec.5} 
In this section, we solve a numerical example to show the efficiency and efficacy of the procedures. We consider a GP problem with linear-normal uncertain random variable coefficients as 
\begin{equation}\label{eq.14}
	\begin{aligned}
		&	\min \quad \tilde{f}_0(\textbf{x})=\tilde{\beta}_{10}x_1^{-1} x_2^{-1}x_3^{-1}+\tilde{\beta}_{20}x_1 x_3\\
		&	\text{s.t.} \\
		&\qquad \tilde{f}_1(\textbf{x})= \tilde{\beta}_{11}x_1 x_2+\tilde{\beta}_{21}x_2 x_3\leq 1,\\
		&\qquad	x_1,x_2,x_3>0,
	\end{aligned}
\end{equation}
where $\tilde{\beta}_{10}\sim\mathcal{LN}(A_{10},B_{10})$, $\tilde{\beta}_{20}\sim\mathcal{LN}(A_{20},B_{20})$, $\tilde{\beta}_{11}\sim\mathcal{LN}(A_{11},B_{11})$, $\tilde{\beta}_{21}\sim\mathcal{LN}(A_{21},B_{21})$ are independent linear-normal uncertain random variables, $A_{10}\sim\mathcal{N}(50,3)$, $B_{10}\sim\mathcal{N}(40,2)$, $A_{20}\sim\mathcal{N}(45,2)$, $B_{20}\sim\mathcal{N}(40,1)$, $A_{11}\sim\mathcal{N}(1,\frac{1}{3})$, $B_{11}\sim\mathcal{N}(\frac{3}{2},\frac{2}{3})$, $A_{21}\sim\mathcal{N}(\frac{2}{3},\frac{1}{3})$, $B_{21}\sim\mathcal{N}(\frac{4}{3},1)$ are independent normal random variables. Based on an $\alpha$ optimistic value point of view, the deterministic form of Problem \ref{eq.14} with a tolerance level $\epsilon$ is
\begin{equation}\label{eq.15}
	\begin{aligned}
		&	\min\quad (40+10\alpha) x_1^{-1} x_2^{-1}x_3^{-1}+(40+5\alpha)x_1 x_3\\
		& \qquad+\Phi^{-1}(1-\epsilon)\sqrt{\big(9\alpha^2+4(1-\alpha)^2\big) x_1^{-2} x_2^{-2}x_3^{-2}+\big(4\alpha^2+(1-\alpha)^2\big)x_1^2 x_3^2}\\
		&	\text{s.t.} \\
		&\quad  \frac{3-\alpha}{2}x_1 x_2+\frac{4-2\alpha}{3}x_2 x_3\\
		&\qquad+\Phi^{-1}(1-\epsilon)\sqrt{\bigg(\frac{1}{9}\alpha^2+\frac{4}{9}(1-\alpha)^2\bigg)x_1^2 x_2^2+\bigg(\frac{1}{9}\alpha^2+(1-\alpha)^2\bigg)x_2^2 x_3^2}\leq 1,\\
		&\quad	x_1,x_2,x_3>0, \alpha\in(0,1), \epsilon \in (0,0.5].
	\end{aligned}
\end{equation} 
To solve Problem \ref{eq.15}, let us introduce two new variables, $x_4$ and $x_5$, defined as
 $x_4=\big(9\alpha^2+4(1-\alpha)^2\big) x_1^{-2} x_2^{-2}x_3^{-2}+\big(4\alpha^2+(1-\alpha)^2\big)x_1^2 x_3^2$ and $x_5=\big(\frac{1}{9}\alpha^2+\frac{4}{9}(1-\alpha)^2\big)x_1^2 x_2^2+\big(\frac{1}{9}\alpha^2+(1-\alpha)^2\big)x_2^2 x_3^2$. Then, Problem \ref{eq.15} becomes as 
\begin{equation}\label{eq.16}
	\begin{aligned}
		&	\min\quad (40+10\alpha) x_1^{-1} x_2^{-1}x_3^{-1}+(40+5\alpha)x_1 x_3+\Phi^{-1}(1-\epsilon)x_4^{\frac{1}{2}}\\
		&	\text{s.t.} \\
		&\quad  \frac{3-\alpha}{2}x_1 x_2+\frac{4-2\alpha}{3}x_2 x_3+\Phi^{-1}(1-\epsilon)x_5^{\frac{1}{2}}\leq 1,\\
		& \quad \big(9\alpha^2+4(1-\alpha)^2\big) x_1^{-2} x_2^{-2}x_3^{-2}x_4^{-1}+\big(4\alpha^2+(1-\alpha)^2\big)x_1^2 x_3^2x_4^{-1}\le1,\\
		& \quad \bigg(\frac{1}{9}\alpha^2+\frac{4}{9}(1-\alpha)^2\bigg)x_1^2 x_2^2x_5^{-1}+\bigg(\frac{1}{9}\alpha^2+(1-\alpha)^2\bigg)x_2^2 x_3^2x_5^{-1}\le 1,\\
		&\quad	x_1,x_2,x_3,x_4,x_5>0, \alpha\in(0,1), \epsilon \in (0,0.5].
	\end{aligned}
\end{equation}
In Problem \ref{eq.16}, the total number of terms is $10$ and the total number of variables is $5.$ So, the degree of difficulty is $=10-5-1=4.$ Let $\delta_1,\delta_2,\ldots,\delta_{10}$ be the corresponding dual variables. The the dual of Problem \ref{eq.16} is
\begin{equation}\label{eq.17}
	\begin{aligned}
		&\max \quad V(\delta)=\bigg(\frac{40+10\alpha}{{\delta_{1}}}\bigg)^{{\delta_{1}}}\bigg(\frac{40+5\alpha}{{\delta_{2}}}\bigg)^{{\delta_{2}}} \bigg(\frac{\Phi^{-1}(1-\epsilon)}{{\delta_{3}}}\bigg)^{{\delta_{3}}}\bigg(\frac{3-\alpha}{{2\delta_{4}}}\bigg)^{{\delta_{4}}}\bigg(\frac{4-2\alpha}{{3\delta_{5}}}\bigg)^{{\delta_{5}}} \\ 
		& \quad
		\bigg(\frac{\Phi^{-1}(1-\epsilon)}{{\delta_{6}}}\bigg)^{{\delta_{6}}}
		\bigg(\frac{9\alpha^2+4(1-\alpha)^2}{{\delta_{7}}}\bigg)^{{\delta_{7}}} \bigg(\frac{4\alpha^2+(1-\alpha)^2}{{\delta_{8}}}\bigg)^{{\delta_{8}}}
		\bigg(\frac{\alpha^2+4(1-\alpha)^2}{{9\delta_{9}}}\bigg)^{{\delta_{9}}}\\
		& \quad
		\bigg(\frac{\alpha^2+9(1-\alpha)^2}{{9\delta_{10}}}\bigg)^{{\delta_{10}}}(\delta_{4}+\delta_{5}+\delta_{6})^{\delta_{4}+\delta_{5}+\delta_{6}} (\delta_{7}+\delta_{8})^{\delta_{7}+\delta_{8}} (\delta_{9}+\delta_{10})^{\delta_{9}+\delta_{10}}\\
		&\text{s.t.}\\
		& \qquad \delta_1+\delta_{2}+\delta_{3}=1, \\
		&\qquad -\delta_{1}+\delta_{2}+\delta_{4}-2\delta_{7}+2\delta_{8}+2\delta_{9}=0,\\
		&\qquad -\delta_{1}+\delta_{4}+\delta_{5}-2\delta_{7}+2\delta_{9}+2\delta_{10}=0,\\
		&\qquad -\delta_{1}+\delta_{2}+\delta_{5}-2\delta_{7}+2\delta_{8}+2\delta_{10}=0,\\
		&\qquad \frac{1}{2}\delta_{3}-\delta_{7}-\delta_{8}=0, \\
		&\qquad \frac{1}{2}\delta_{6}-\delta_{9}-\delta_{10}=0.
	\end{aligned}
\end{equation}
Solving Problem \ref{eq.17} for a fixed $\alpha \in (0,1)$ with the tolerance level $\epsilon=0.05$, we get the dual solution. Consequently, by the primal-dual relationship, we find the primal solution. When we set parameter $\alpha$ to discrete values from $0.1$ to $0.9$ with an increment $0.1$, the corresponding solutions are computed in Table \ref{table.1}. \\
\begin{table}[htbp]
	\caption{ Optimal solutions }
	\label{table.1}
	\centering
	\begin{tabular}{c c c c c c c }
		\hline\hline
		$\alpha$ & $x_1^*$ & $x_2^*$ & $x_3^*$ & $x_4^*$ & $x_5^*$  & Objective value\\ [0.5ex]
		\hline\hline
		0.1 & 1.454 & 0.167 & 1.233 & 39.882 & 0.056 & 219.893 \\
		0.2 & 1.409 & 0.183 & 1.219 & 31.919 & 0.051 & 213.316\\
		0.3 & 1.361 & 0.201 & 1.207 & 27.584 & 0.047 & 206.732 \\
		0.4 & 1.310 & 0.223 & 1.199 & 26.036 & 0.042 & 200.180\\
		0.5 & 1.254 & 0.247 & 1.195 & 26.533 & 0.038 & 193.715\\
		0.6 & 1.194 & 0.274 & 1.197 & 28.480 & 0.034 & 187.493 \\
		0.7 & 1.132 & 0.304 & 1.208 & 31.494 & 0.031 & 181.885\\
		0.8 & 1.075 & 0.332 & 1.225 & 35.514 & 0.030 & 177.570 \\
		0.9 & 1.033 & 0.354 & 1.242 & 40.852 & 0.032 & 175.417 \\ [1ex]
		\hline

	\end{tabular}
	
\end{table}\\
We find the deterministic form of Problem \ref{eq.14} based on an optimistic value point of view. Further, we solve the transformed deterministic problem for different values of $\alpha\in(0,1)$, which are given in Table \ref{table.1}. Similarly, it is easy to find the solution to Problem \ref{eq.14} based on the pessimistic value point of view and the expected value point of view. However, from Table \ref{table.1}, we visualize the solution to Problem \ref{eq.14} based on the pessimistic value point of view and the expected value point of view. Figure \ref{fig:6} shows the objective values with respect to the parameter $\alpha$ based on the optimistic value point of view and the pessimistic value point of view. However, $\alpha=0.5$ gives the optimal objective value based on expected value point of view.
\begin{figure}[h] 
	\centering 
	\includegraphics[scale=0.2]{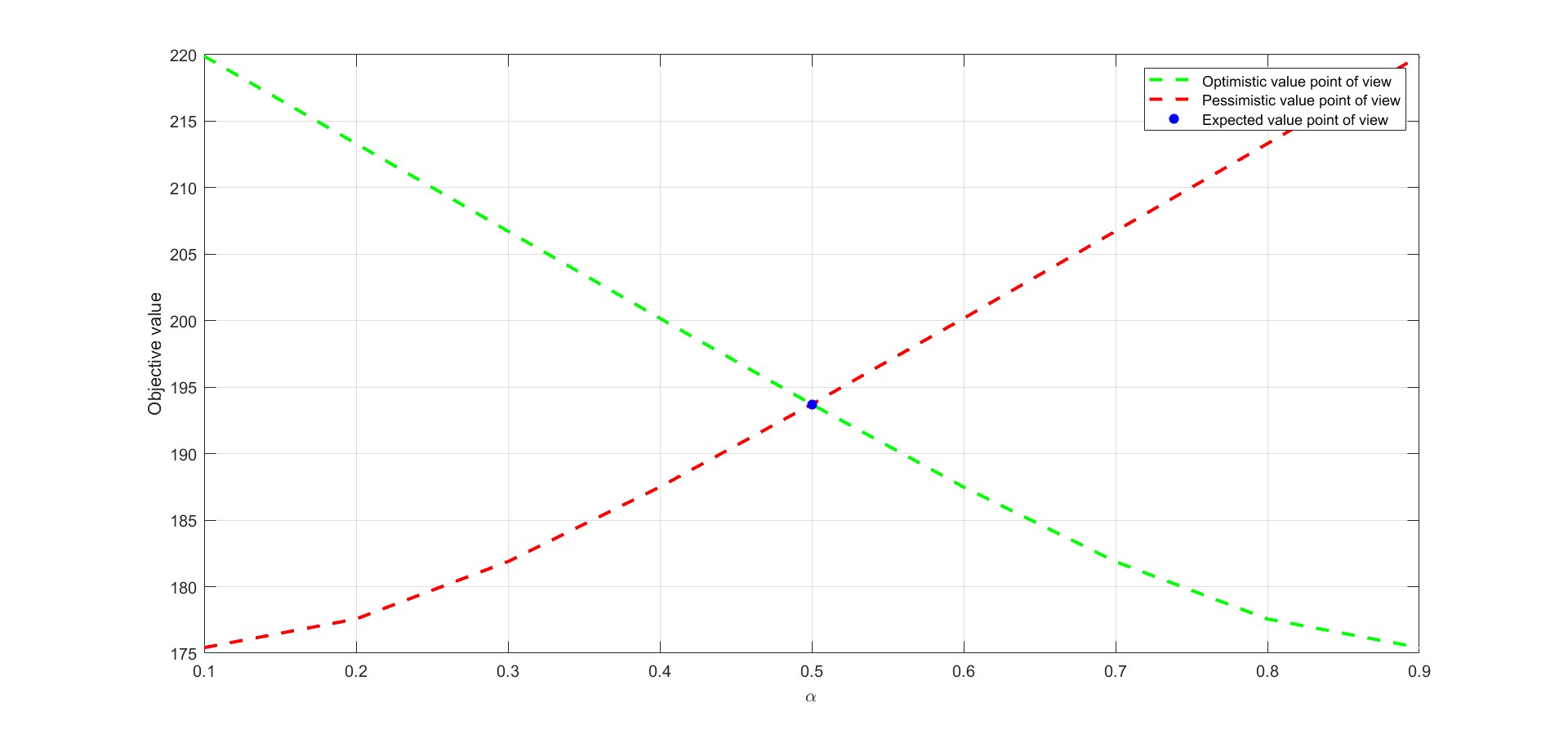} 
	\caption{Optimal objective values with respect to the parameter $\alpha$.}
	\label{fig:6}
\end{figure}
\section{Conclusion}\label{sec.6}
 In conclusion, our work delves into the realm of GP from a unique perspective by incorporating independent linear-normal uncertain random variables into the problem. We introduce and define the concept of an uncertain random variable and propose a novel approach in the form of linear-normal uncertain random variables. We are able to transform these uncertain random variables into their stochastic counterparts by using three different criteria: optimistic value, pessimistic value, and expected value. This turns the uncertain random GP problem into a manageable stochastic GP problem.
 
\par One of the primary advantages of our approach is its ability to model and address uncertainty in a structured manner, which is particularly useful in real-world scenarios where data is not always deterministic. This method allows decision-makers to make informed choices that consider the inherent uncertainty and randomness in the input coefficients, ultimately leading to more reliable solutions.
 
 \par Furthermore, we present an equivalent deterministic representation of the transformed GP problem, which simplifies the decision-making process for practitioners who may prefer working with deterministic models. This duality between stochastic and deterministic representations provides flexibility in addressing the problem based on the decision-maker's preference and data availability.
 
 \par However, it is essential to acknowledge the limitations of our work. Firstly, the computational complexity of solving stochastic GP problems can be substantial, especially for large-scale instances, and may require specialized optimization techniques or approximation methods. Secondly, our approach relies on the assumption that the coefficients are independent linear-normal uncertain random variables, which might not always hold in practical applications. Addressing correlated uncertainties remains an avenue for future research. Additionally, the transformation of uncertain variables into stochastic forms may introduce approximations, potentially impacting the accuracy of the results.
 
 \par In summary, our work offers a valuable contribution to the field of GP problems by providing a systematic approach to handling uncertainty and randomness through linear-normal uncertain random variables. While it offers practical benefits in decision-making under uncertainty, decision-makers should be aware of the computational challenges and the assumptions inherent in our approach. Future research could explore more sophisticated techniques to handle correlated uncertainties and further improve the applicability of stochastic GP problems in real-world scenarios.\\\\
{\small \textbf{Acknowledgement }The first author is thankful to CSIR for financial support of this work through file No: 09\textbackslash 1059(0027)\textbackslash 2019-EMR-I.}
\\
\textbf{Data availability } Data sharing not applicable to this article as no datasets were generated. A random data set is taken for the numerical example given in this paper.\\
\textbf{Conflict of interests} The authors have no relevant financial or non financial interests to disclose. The authors
declare that they have no conflict of interests.

\end{document}